\providecommand{\cal}[1]{\mathcal{#1}}
\newcommand{\JJgen}{\mathbf{A}}
\newcommand{\JJB}{{\mathbb{B}}}
\newcommand{\JJC}{{\mathbb{C}}}
\newcommand{\JJN}{{\mathbb{N}}}
\newcommand{\JJR}{{\mathbb{R}}}
\newcommand{\JJRn}{{\mathbb{R}}^{n}}
\newcommand{\JJlap}{\operatorname{\Delta}}
\newcommand{\JJmlap}{-\!\operatorname{\Delta}}
\newcommand{\JJscal}[2]{(\,#1\,|\, #2\,)}
\newcommand{\JJSet}[2]{\bigl\{\,#1\bigm| #2\,\bigr\}}
\newcommand{\JJvvvert}{{|\hspace{-1.6pt}|\hspace{-1.6pt}|}}
\newcommand{\JJdual}[2]{\ensuremath{\langle{#1},{#2}\rangle}} 
\def\Xint#1{\mathchoice
{\XXint\displaystyle\textstyle{#1}}%
{\XXint\textstyle\scriptstyle{#1}}%
{\XXint\scriptstyle\scriptscriptstyle{#1}}%
{\XXint\scriptscriptstyle\scriptscriptstyle{#1}}%
\!\int}
\def\XXint#1#2#3{{\setbox0=\hbox{$#1{#2#3}{\int}$}
\vcenter{\hbox{$#2#3$}}\kern-.5\wd0}}
\def\JJdashint{\Xint-}
\newtheorem{theorem}{Theorem}
\newtheorem{proposition}{Proposition}
\newtheorem{lemma}{Lemma}
\theoremstyle{definition}
\theoremstyle{remark}
\newtheorem{remark}{Remark}
\begin{document}
\title{A Class of Well-Posed Parabolic Final Value Problems}

\author{Jon Johnsen}
\address{Department of Mathematics, Aalborg University, Skjernvej 7A, DK-9220 Denmark}
\email{jjohnsen@math.aau.dk}
\subjclass[2010]{35A01, 47D06}

%
%

\begin{abstract}{This paper focuses on parabolic final
  value problems, and well-posedness is proved for a large class of these.
  The clarification is obtained from Hilbert spaces that characterise data
  that give existence, uniqueness and stability of the solutions.  
  The data space is the graph normed domain of an unbounded operator that maps final states
  to the corresponding initial states. It induces a new compatibility condition, 
  depending crucially on the fact that analytic semigroups always are invertible in the class of closed operators. 
  Lax--Milgram operators in vector distribution spaces are the main framework.
  The final value heat conduction problem on a smooth open set is
  also proved to be well posed, and non-zero Dirichlet data are shown to require an extended
  compatibility condition obtained by adding an improper Bochner integral.  
}
\end{abstract}

\keywords{parabolic; final value data; compatibility condition; well-posed}

\maketitle

\section{Introduction}

Well-posedness of final value problems
for a large class of parabolic differential equations was recently obtained by the author jointly
with A.\ E.\ Christensen, and an ample description was given for a broad audience in \cite{JJChJo18ax},
after the announcement in \cite{JJChJo18}. The present exposition is more
concise and incorporates some improvements that, as indicated, may lead to
future developments of the theory. 

The below theoretical analysis of the problems shows that they are well posed, i.e., they  have 
\emph{existence, uniqueness} and \emph{stability} of solutions $u\in X$ for given data, say
$(f,u_T)\in Y$, in~certain Hilbertable spaces $X$, $Y$ that are described explicitly.

This has seemingly closed a gap in the theory, which has remained since the 1950's,
although the well-posedness is decisive for the interpretation and accuracy of 
numerical schemes that would be used in practice (John~\cite{JJJohn55} made an early
contribution in this direction).
In rough terms, the results are derived from a useful structure on the reachable set for a
general class of parabolic differential equations.

The primary example (addressed in the end of the paper) is the heat conduction problem of characterising the functions
$u(t,x)$ that in a $C^\infty$-smooth bounded open set 
$\Omega\subset\JJRn$ with boundary $\partial\Omega$
fulfil the equations (whereby $\Delta=\partial_{x_1}^2+\dots+\partial_{x_n}^2$),
\begin{equation}  \label{JJheat-intro}
\left\{
\begin{aligned}
  \partial_tu(t,x)-\JJlap u(t,x)&=f(t,x) &&\quad\text{for $t\in\,]0,T[\,$,  $x\in\Omega$,}
\\
   u(t,x)&=g(t,x) &&\quad\text{for $t\in\,]0,T[\,$, $x\in\partial\Omega$,}
\\
  u(T,x)&=u_T(x) &&\quad\text{for $x\in\Omega$}.
\end{aligned}
\right.
\end{equation}
An area of interest of this could be a nuclear power plant 
hit by a power failure at time $t=0$: after power is regained at $t=T>0$, and 
the reactor temperatures $u_T(x)$ are measured, it would be crucial to
calculate backwards to settle whether at an earlier time $t_0<T$ the temperatures $u(t_0,x)$
could cause damage to the fuel rods.

A short plan of the paper is the following: the abstract result on final value problems is given in
Section~\ref{JJafvp-ssect} below. Its proof then follows in Section~\ref{JJaproof-sect}. The results and
proofs for the heat problem in \eqref{JJheat-intro} are presented
Theorems~\ref{JJyz-thm}--\ref{JJyz-thm''} in Section~\ref{JJheat-sect}.

\subsection{Background: Phenomena of Instability}
It may be recalled that there is a phenomenon of $L_2$-instability in the homogeneous case
$f=0$, $g=0$ in \eqref{JJheat-intro}. This classical fact was perhaps first described in 1961 by Miranker 
\cite{JJMiranker61}; but it was also emphasized more recently by Isakov~\cite{JJIsa98}.

The instability results by considering the Dirichlet realization
of the Laplace operator, written $\JJmlap_D$, and the  $L_2(\Omega)$-orthonormal
basis $e_1(x), e_2(x),\dots$ of eigenfunctions associated to the
usual ordering of its eigenvalues
$0<\lambda_1\le\lambda_2\le\dots$, which via Weyl's law for the counting function, cf.\ \cite[Ch.~6.4]{JJCuHi53}, gives
\begin{equation} \label{JJWeyl-id}
  \lambda_j={\cal O}(j^{2/n})\quad\text{ for $j\to\infty$}.
\end{equation}
The orthonormal basis gives rise to a sequence of final value data,
\begin{equation}
  u_{T,j}(x)=e_j(x)\quad\text{ for $j\in\JJN$}.
\end{equation}
This sequence clearly lies on the unit sphere in $L_2(\Omega)$ as
$\|u_{T,j}\|= \|e_j\|=1$ for $j\in\JJN$.
But the corresponding solutions $u_j$ to the heat equation $u'\JJmlap u=0$, namely
\begin{equation}
  u_j(t,x)= e^{(T-t)\lambda_j}e_j(x),
\end{equation}
obviously have initial states $u(0,x)$ with $L_2$ norms that because of \eqref{JJWeyl-id} 
grow \emph{rapidly} with the index $j$,
\begin{equation}
  \|u_j(0,\cdot)\| = e^{T\lambda_j}\|e_j\| = e^{T\lambda_j}\nearrow\infty.
\end{equation}

Consequently, when a final state $u_T(x)$ is approximated through measurements $\tilde
u_{T,1}(x)$, $\tilde u_{T,2}(x)$, \dots\ made with increasing accuracy in $L_2(\Omega)$-norm, it is likely that each
difference $\tilde u_{T,j+1}-\tilde u_{T,j}$ has more non-zero coordinates with respect to the
orthonormal basis $(e_n)_{n\in\JJN}$ than the previous one, which in view of the above rapid
blow-up makes it likely that the calculated initial state correction 
$\tilde u_{j+1}(0,x)-\tilde u_j(0,x)$ will be of the same size (in $L_2$-norm) as the previous one, $\tilde u_{j}(0,x)-\tilde u_{j-1}(0,x)$.
In this case, it remains entirely unclear whether or not $\tilde u_{j+1}(0,x)$ is a more accurate
approximation to $u(0,x)$ than $\tilde u_j(0,x)$---or if $\tilde u_{j+1}(x)$ has been calculated in vein.

The above $L_2$-instability cannot be explained away, of course, but in reality it
only indicates that the $L_2$-norm is an insensitive choice for problem \eqref{JJheat-intro}. It
therefore seems reasonable to pose the rethorical
\begin{quote}
  {\bf Question:} Is the final value heat problem in \eqref{JJheat-intro} well posed?
\end{quote}
While the answer is ``yes'', one could wonder why this has not been proved before.
Here it should be mentioned that our description of reachable 
sets for parabolic problems exploits the previously unavailable structures in the next section.

\subsection{Main Tool: Injectivity}
The key to the analysis of final value problems lies at a rather different spot, namely, that an
analytic semigroup of operators (like $e^{t\JJlap_D}$) always consists of \emph{injections}. This enters 
both at the technical and  conceptual level, that is, injectivity enters not just in the proofs, 
but also in the objects that the theorems are concerned with.

A few aspects of semigroup theory in a complex Banach space $B$ is therefore recalled here.
Besides classical references by Davies~\cite{JJDav80}, Pazy~\cite{JJPaz83}, Tanabe~\cite{JJTan79} or Yosida~\cite{JJYos80},
a more recent account is given in \cite{JJABHN11}.

The generator is
$\JJgen x=\lim_{t\to0^+}\frac1t(e^{t\JJgen}x-x)$, where $x$ belongs to the domain  $D(\JJgen)$ when the
limit exists. $\JJgen$ is a densely defined, closed linear 
operator in $B$ that for some $\omega \geq 0$, $M \geq 1$ satisfies 
$\|(\JJgen-\lambda)^{-n}\|_{\JJB(B)}\le M/(\lambda-\omega)^n$ for $\lambda>\omega$, $n\in\JJN$.

The corresponding $C_0$-semigroup of operators $e^{t\JJgen}\in\JJB(B)$ is of type $(M,\omega)$: 
it fulfils that $e^{t\JJgen}e^{s \JJgen}=e^{(s+t)\JJgen}$ for $s,t\ge0$, $e^{0\JJgen}=I$,
$\lim_{t\to0^+}e^{t \JJgen}x=x$ for $x\in B$, and
\begin{align}  
  \|e^{t\JJgen}\|_{\JJB(B)} \leq M e^{\omega t} \quad \text{ for } 0 \leq t < \infty.
\end{align}
Indeed, the Laplace transformation 
$(\lambda I-\JJgen)^{-1}=\int_0^\infty e^{-t\lambda}e^{t\JJgen}\,dt$ 
gives a bijection of the semigroups of type $(M,\omega)$ onto (the resolvents of) the stated class of generators.

The well-known result below gives a criterion for $\JJgen$ to generate a
$C_0$-semigroup $e^{z\JJgen}$ that is defined and analytic for $z$ in the open sector
\begin{equation}
  S_{\theta}= \JJSet{z\in\JJC}{z\ne0,\ |\arg z | < \theta}.
\end{equation}
It is formulated  in terms of the spectral sector
\begin{equation} 
  \Sigma_{\theta} 
  =\JJSet{ \lambda \in\JJC}{ |\arg\lambda | < \frac{\pi}{2} + \theta} \cup \left\{ 0 \right\}.
\end{equation}

\begin{proposition}  \label{JJPazy'-prop} 
When $\JJgen$ generates a $C_0$-semigroup of type $(M,\omega)$ and $\omega\in\rho(\JJgen)$,
then the following properties are equivalent for 
$\theta \in\,]0,\frac{\pi}{2}[\,$:
\begin{itemize}
  \item[{\rm (i)}]
  The resolvent set $\rho(\JJgen)$ fulfils $\rho(\JJgen) \supset \omega+\Sigma_{\theta}$  and 
\begin{equation} 
  \sup\JJSet{|\lambda-\omega|\cdot\|(\lambda I - \JJgen)^{-1} \|_{\JJB(B)}}{\lambda\in\omega+\Sigma_{\theta}, \
    \lambda \neq \omega} <\infty. 
\end{equation}
  \item[{\rm (ii)}] 
 The semigroup $e^{t \JJgen}$ extends to an analytic semigroup $e^{z \JJgen}$ defined for $z\in
 S_{\theta}$ with
\begin{equation}
   \sup\JJSet{ e^{-z\omega}\|e^{z\JJgen}\|_{\JJB(B)}}{z\in \overline{S}_{\theta'}}<\infty \quad \text{whenever $0<\theta'<\theta$}. 
\end{equation}
\end{itemize}
In the affirmative case, 
$e^{t \JJgen}$ is differentiable in $\JJB(B)$ for $t>0$, $(e^{t\JJgen})' = \JJgen e^{t\JJgen}$ and
\begin{align} 
   \sup_{t>0} te^{-t\omega}\|\JJgen e^{t\JJgen}\|_{\JJB(B)} <\infty .
\end{align}
\end{proposition}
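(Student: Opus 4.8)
The plan is to recognize Proposition~\ref{JJPazy'-prop} as essentially a normalized restatement of the classical Hille--Yosida--type characterization of analytic semigroups (as found in Pazy~\cite{JJPaz83} or \cite{JJABHN11}), reduced to the case $\omega=0$ by the standard rescaling trick. First I would introduce the translated operator $\JJgen_0=\JJgen-\omega$, which generates the rescaled semigroup $e^{t\JJgen_0}=e^{-\omega t}e^{t\JJgen}$; this semigroup is bounded on $[0,\infty[$ since $\|e^{t\JJgen_0}\|_{\JJB(B)}\le M$, and one has $\rho(\JJgen_0)=\rho(\JJgen)-\omega$, $(\lambda I-\JJgen_0)^{-1}=((\lambda+\omega)I-\JJgen)^{-1}$. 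Under this substitution, condition~(i) becomes precisely $\Sigma_\theta\subset\rho(\JJgen_0)$ together with $\sup\{|\lambda|\,\|(\lambda I-\JJgen_0)^{-1}\|_{\JJB(B)} : \lambda\in\Sigma_\theta,\ \lambda\ne0\}<\infty$, and condition~(ii) becomes $e^{t\JJgen_0}$ extending to an analytic semigroup on $S_\theta$ bounded on each closed subsector $\overline{S}_{\theta'}$. The hypothesis $\omega\in\rho(\JJgen)$, i.e. $0\in\rho(\JJgen_0)$, ensures that $0$ may harmlessly be included in $\Sigma_\theta$ and that $\JJgen_0$ is boundedly invertible, which is convenient for the sector estimates near the origin.

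With the reduction in place, the proof of (i)$\Rightarrow$(ii) is the construction of $e^{z\JJgen_0}$ via the Dunford integral
\begin{equation}
  e^{z\JJgen_0}=\frac{1}{2\pi i}\int_{\Gamma} e^{z\lambda}(\lambda I-\JJgen_0)^{-1}\,d\lambda,
\end{equation}
where $\Gamma$ is the boundary of a slightly smaller sector $\Sigma_{\theta''}$ with $\theta'<\theta''<\theta$, oriented so that $\operatorname{Re}(z\lambda)\to-\infty$ along the two rays when $z\in S_{\theta'}$. The resolvent bound from~(i) makes the integral absolutely convergent for such $z$; deforming $\Gamma$ and differentiating under the integral sign give analyticity in $z$, the semigroup law follows from the resolvent identity and Fubini/Cauchy, and strong continuity $e^{z\JJgen_0}x\to x$ as $z\to0$ in $S_{\theta'}$ is obtained on $D(\JJgen_0)$ by a contour estimate and then extended to all of $B$ by density together with the uniform bound $\sup_{z\in\overline{S}_{\theta'}}\|e^{z\JJgen_0}\|_{\JJB(B)}<\infty$, the latter coming from scaling $\lambda\mapsto\lambda/|z|$ in the integral. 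The converse (ii)$\Rightarrow$(i) goes the other way: from boundedness of $e^{z\JJgen_0}$ on $\overline{S}_{\theta'}$ one recovers the resolvent on $\Sigma_{\theta'}$ through the sectorial Laplace transform $(\lambda I-\JJgen_0)^{-1}=\int_0^\infty e^{-\lambda r e^{i\psi}}e^{re^{i\psi}\JJgen_0}e^{i\psi}\,dr$ along a ray $e^{i\psi}\,\mathbb{R}_+$ chosen according to $\arg\lambda$, which yields the estimate $|\lambda|\,\|(\lambda I-\JJgen_0)^{-1}\|\le C$ on each $\Sigma_{\theta'}$, hence on $\Sigma_\theta$ after letting $\theta'\uparrow\theta$.

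Finally, the supplementary assertions: differentiability of $t\mapsto e^{t\JJgen_0}$ in $\JJB(B)$ for $t>0$ with derivative $\JJgen_0 e^{t\JJgen_0}$ follows by differentiating the Dunford integral, since $\lambda e^{z\lambda}(\lambda I-\JJgen_0)^{-1}$ is still integrable over $\Gamma$ and $\lambda(\lambda I-\JJgen_0)^{-1}=I+\JJgen_0(\lambda I-\JJgen_0)^{-1}$ identifies the derivative as $\JJgen_0 e^{t\JJgen_0}$; the Cauchy estimate $\|\JJgen_0 e^{t\JJgen_0}\|_{\JJB(B)}\le C/t$ then drops out by the scaling $\lambda\mapsto\lambda/t$. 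Translating back via $\JJgen e^{t\JJgen}=(\JJgen_0+\omega)e^{\omega t}e^{t\JJgen_0}$ gives $(e^{t\JJgen})'=\JJgen e^{t\JJgen}$ and $\sup_{t>0} t e^{-t\omega}\|\JJgen e^{t\JJgen}\|_{\JJB(B)}<\infty$, using that $t e^{-t\omega}\cdot\omega e^{\omega t}\|e^{t\JJgen_0}\|$ stays bounded as well. I expect the main obstacle to be purely bookkeeping rather than conceptual: keeping the various opening half-angles $\theta'<\theta''<\theta$ straight through every contour deformation, and handling the behaviour of the integrals near $\lambda=0$ (where the assumption $0\in\rho(\JJgen_0)$ is what saves the day) and near $\lambda=\infty$ (controlled by the $e^{z\lambda}$ factor) simultaneously, so that all estimates are genuinely uniform on the closed subsectors claimed.
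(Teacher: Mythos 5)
Your overall strategy coincides with the paper's: the paper also reduces to $\omega=0$ via the translation $\JJgen_0=\JJgen-\omega I$, using $e^{t\JJgen}=e^{t\omega}e^{t\JJgen_0}$ (justified there through the Laplace transform and extended to $S_\theta$ by unique analytic continuation), and then simply \emph{cites} Theorem~2.5.2 of Pazy for the case $\omega=0$. You instead re-derive that classical case by the Dunford integral and the sectorial Laplace transform; that is a legitimate expansion of what the paper treats as a known black box, and your sketch of (i)$\Rightarrow$(ii) and (ii)$\Rightarrow$(i) follows the standard lines correctly.

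There is, however, one concrete gap at the very end. To obtain $\sup_{t>0}te^{-t\omega}\|\JJgen e^{t\JJgen}\|_{\JJB(B)}<\infty$ you write $\JJgen e^{t\JJgen}=e^{t\omega}(\JJgen_0+\omega I)e^{t\JJgen_0}$ and assert that $t\omega\,\|e^{t\JJgen_0}\|_{\JJB(B)}$ ``stays bounded as well.'' With only the bound $\|e^{t\JJgen_0}\|_{\JJB(B)}\le M$ available from type $(M,0)$, this term is $\le t\omega M$, which is unbounded as $t\to\infty$; so as written the step fails. What rescues the claim is precisely the hypothesis $\omega\in\rho(\JJgen)$, i.e.\ $0\in\rho(\JJgen_0)$: combined with the sectorial resolvent estimate of (i) this places $\sigma(\JJgen_0)$ in a half-plane $\{\Re\lambda\le-\delta\sin\theta\}$ for some $\delta>0$, and since bounded analytic semigroups have growth bound equal to spectral bound, $\|e^{t\JJgen_0}\|_{\JJB(B)}$ decays exponentially, making $t\omega\|e^{t\JJgen_0}\|$ bounded. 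Equivalently, you can get the estimate directly from your Dunford integral for $\JJgen e^{t\JJgen}$ by deforming the contour so that near $\lambda=\omega$ it runs along an arc of a small circle $|\lambda-\omega|=\delta$ contained in $\rho(\JJgen)$, hence strictly to the left of $\Re\lambda=\omega$; without $\omega\in\rho(\JJgen)$ the integrand $|\lambda|\,\|(\lambda I-\JJgen)^{-1}\|\sim|\lambda|/|\lambda-\omega|$ is not integrable near $\omega$ and the $1/t$ bound is lost. Either way, you should make explicit that this is where the standing assumption $\omega\in\rho(\JJgen)$ is actually used.
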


In case $\omega=0$,
this is just a review of the main parts of Theorem~2.5.2 in \cite{JJPaz83}.
For general $\omega\ge0$, one can reduce to this case, since $\JJgen=\omega I+(\JJgen-\omega I)$ yields the operator
identity $e^{t\JJgen}=e^{t\omega}e^{t(\JJgen-\omega I)}$, where $e^{t(\JJgen-\omega I)}$ is of type
$(M,0)$. Indeed, the right-hand side is easily seen to be a $C_0$-semigroup, which since $e^{t\omega}=1+t\omega+o(t)$ has
$\JJgen$ as its generator, so the identity results from the bijectiveness of the Laplace transform.
In this way, (i)$\iff$(ii) follows straightforwardly from the case $\omega=0$, using for both implications that 
$e^{z\JJgen}=e^{z\omega}e^{z(\JJgen-\omega I)}$ holds in $S_\theta$ by unique analytic extension.

To elucidate the role of \emph{injectivity} of $e^{t \JJgen}$, 
it is recalled that if $e^{t\JJgen}$ is analytic, then $u'=\JJgen u$, $u(0)=u_0$ is always uniquely solved by
$u(t)=e^{t\JJgen}u_0$ for \emph{every} $u_0\in B$. 
Here injectivity clearly is  equivalent to the important geometric property that the trajectories of two solutions
$e^{t\JJgen}v_0$ and $e^{t\JJgen}w_0$ of $u'=\JJgen u$  have no confluence point in $B$ for $v_0\ne w_0$.
Nevertheless, the literature seems to have focused only on examples of semigroups with non-invertibility of
$e^{t\JJgen}$, like \cite[Ex.~2.2.1]{JJPaz83}.

The reason for stating Proposition~\ref{JJPazy'-prop} for general type $(M,\omega)$
semigroups is that it shows explicitly that cases with $\omega>0$ only have different estimates in
the closed subsectors $\overline{S}_{\theta'}$---but the mere analyticity in $S_{\theta}$ 
is unaffected by the translation by $\omega I$. One therefore has the following improved version of
\cite[Prop.\ 1]{JJChJo18ax}:

\begin{proposition}  \label{JJinj-prop}
If a $C_0$-semigroup $e^{t\JJgen}$ of type $(M,\omega)$ on a complex Banach space $B$ has an
analytic extension $e^{z\JJgen}$ to
$S_{\theta}$ for $\theta>0$, then $e^{z\JJgen}$ is \emph{injective} for every $z \in S_\theta$.
\end{proposition}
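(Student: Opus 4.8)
The plan is to exploit the semigroup property together with analyticity to reduce injectivity at an arbitrary $z\in S_\theta$ to injectivity of real-time operators $e^{t\JJgen}$ for small $t>0$, and then to prove the latter by a connectedness/analytic-continuation argument. First I would observe that it suffices to treat the case $\omega=0$: as in the discussion after Proposition~\ref{JJPazy'-prop}, the identity $e^{z\JJgen}=e^{z\omega}e^{z(\JJgen-\omega I)}$ holds throughout $S_\theta$ by unique analytic extension, and multiplication by the nonzero scalar $e^{z\omega}$ does not affect injectivity; so I may assume $e^{t\JJgen}$ is of type $(M,0)$.

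Next, fix $z_0\in S_\theta$ and suppose $e^{z_0\JJgen}x=0$ for some $x\in B$. Choose a small real $t_0>0$ with $t_0<\operatorname{Re}z_0$ (shrinking if necessary so that $z_0-t_0\in S_\theta$ as well, which is possible since $S_\theta$ is an open sector containing the positive real axis in its interior). By the semigroup law on the sector, $e^{z_0\JJgen}=e^{t_0\JJgen}e^{(z_0-t_0)\JJgen}$, so $e^{t_0\JJgen}y=0$ where $y=e^{(z_0-t_0)\JJgen}x\in B$. Hence it is enough to show that $e^{t\JJgen}$ is injective for every real $t>0$; indeed, once $e^{t_0\JJgen}y=0$ forces $y=0$, one iterates: $y=e^{(z_0-t_0)\JJgen}x=0$, and if $z_0-t_0$ is not yet real one repeats the reduction, peeling off real increments until the remaining argument can be taken real, forcing $x=0$. (Alternatively, and more cleanly: for $t>0$ small, $e^{t\JJgen}x=0$ and analyticity of $z\mapsto e^{z\JJgen}x$ on the connected set $S_\theta$, combined with $e^{(t+s)\JJgen}x=e^{s\JJgen}e^{t\JJgen}x=0$ for all $s\ge 0$, propagates the vanishing; I will phrase whichever is shorter.)

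It remains to prove that $e^{t\JJgen}$ is injective for each real $t>0$. Suppose $e^{t_1\JJgen}x=0$. Consider the $B$-valued map $\varphi(z)=e^{z\JJgen}x$, which is analytic on $S_\theta$ and continuous up to $0$ with $\varphi(0)=x$. For $s\ge 0$ one has $\varphi(t_1+s)=e^{s\JJgen}\varphi(t_1)=0$, so $\varphi$ vanishes on the ray $[t_1,\infty[$. Since this ray is a subset of $S_\theta$ with an accumulation point in the connected open set $S_\theta$, the identity theorem for vector-valued holomorphic functions (apply a continuous linear functional and use the scalar identity theorem, or cite the standard Banach-space version) gives $\varphi\equiv 0$ on $S_\theta$. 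By continuity at $0$, $x=\varphi(0)=\lim_{t\to 0^+}e^{t\JJgen}x=0$. This establishes injectivity of $e^{t\JJgen}$ for all $t>0$, and, via the reduction above, of $e^{z\JJgen}$ for all $z\in S_\theta$.

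The one point requiring care—the main obstacle—is the appeal to the identity theorem: one must be sure that $z\mapsto e^{z\JJgen}x$ is genuinely holomorphic on the \emph{open} sector $S_\theta$ (which is exactly the hypothesis, analyticity of the extension) and that $S_\theta$ is connected, so that vanishing on a ray forces vanishing everywhere; the continuity down to the boundary point $0$ is the standard $C_0$ property. No quantitative resolvent bounds are needed for this argument—only analyticity, the semigroup law on the sector, and the strong continuity at $0$—which is why the statement holds for general type $(M,\omega)$ without extra hypotheses.
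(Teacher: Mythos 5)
Your core mechanism is sound and in fact close in spirit to the paper's own proof: both arguments show that the orbit map $z\mapsto e^{z\JJgen}x$ vanishes identically on $S_\theta$ and then recover $x=0$ from strong continuity at $t=0$. The difference lies in how the local vanishing is produced. The paper Taylor-expands around $z_0$ and uses the identity $f^{(n)}(z_0)=\JJgen^n e^{z_0\JJgen}x$ for analytic semigroups (cf.\ \cite[Lem.~2.4.2]{JJPaz83}), so that every Taylor coefficient is $\JJgen^n$ applied to the zero vector; you instead propagate the zero along a ray via the semigroup law and invoke the identity theorem. Your route is slightly more elementary, since it needs no differentiability lemma, only the semigroup law on the sector, the identity theorem for vector-valued holomorphic maps, and the $C_0$ property; both are legitimate.

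However, your second paragraph contains a genuine flaw: the reduction to real times does not work. If $\Im z_0\neq 0$, subtracting positive reals from $z_0$ never yields a real number, so ``peeling off real increments until the remaining argument can be taken real'' is an iteration that cannot terminate; worse, since $|\arg(z_0-t_0)|\ge|\arg z_0|$, the iterates exit $S_\theta$ after only a bounded total decrement of the real part, so you cannot even approach the imaginary axis. Fortunately the reduction is unnecessary: your third-paragraph argument applies verbatim with the real $t_1$ replaced by the complex $z_0$. Indeed, $e^{(z_0+s)\JJgen}=e^{s\JJgen}e^{z_0\JJgen}$ for $s\ge0$ (the sectorial semigroup law you already invoke), the ray $z_0+[0,\infty[\,$ lies in $S_\theta$ and accumulates at $z_0$, so the identity theorem gives $e^{z\JJgen}x\equiv0$ on $S_\theta$, and $x=\lim_{t\to0^+}e^{t\JJgen}x=0$. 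Delete the reduction and run the ray argument directly at $z_0$, and the proof is correct. (The preliminary reduction to $\omega=0$ is harmless but likewise unnecessary; neither your argument nor the paper's uses the type bounds.)
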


\begin{proof}
Let $e^{z_0 \JJgen} u_0 = 0$ hold for some $u_0 \in B$ and $z_0 \in S$.
Analyticity of $e^{z\JJgen}$ in $S_{\theta}$ carries over by the differential calculus in Banach
spaces to the map $f(z)= e^{z\JJgen}u_0$.
So for $z$ in a suitable open ball $B(z_0,r)\subset S_{\theta}$, a Taylor expansion and the identity
$f^{(n)}(z_0) = \JJgen^n e^{z_0 \JJgen}u_0$ for analytic semigroups (cf.~\cite[Lem.~2.4.2]{JJPaz83})  give
\begin{align}
  f(z) = \sum_{n=0}^{\infty} \frac{1}{n!}(z-z_0)^n f^{(n)}(z_0)=\sum_{n=0}^{\infty}
  \frac{1}{n!}(z-z_0)^n 
    \JJgen^n e^{z_0 \JJgen}u_0\equiv 0.
\end{align}
Hence $f\equiv 0$ on $S_{\theta}$ by unique analytic extension.
Now, as $e^{t\JJgen}$ is strongly continuous,
$u_0 = \lim_{t \rightarrow 0^+} e^{t\JJgen} u_0 = \lim_{t \rightarrow 0^+}f(t) = 0$.
Thus the null space of $e^{z_0\JJgen}$ is trivial.
\end{proof}

\begin{remark} 
The injectivity in Proposition~\ref{JJinj-prop} 
was claimed by Showalter \cite{JJSho74} for  
$z>0$, $\theta\le \pi/4$ and $B$ a Hilbert space
(with a flawed proof, as noted in \cite[Rem.~1]{JJChJo18ax}, cf.\  details on the
counter-example in Lemma 3.1 and Remark 3 in \cite{JJ18logconv}). A local version for the
Laplacian on $\JJRn$ was given by Rauch \cite[Cor.~4.3.9]{JJRau91}.
\end{remark}

As a consequence of the above injectivity, for an \emph{analytic} semigroup 
$e^{t\JJgen}$ we may consider its inverse that, consistently with the case in which $e^{t\JJgen}$ forms a group in
$\mathbb{B}(B)$, may be denoted for $t>0$ by
$e^{-t\JJgen} = (e^{t\JJgen})^{-1}$.

Clearly $e^{-t\JJgen}$ maps its domain $D(e^{-t\JJgen})$, which is the range $R(e^{t\JJgen})$, bijectively onto $H$, and
it is in general an unbounded, but closed operator in $B$. 

Specialising to a Hilbert space $B=H$, then also $(e^{t\JJgen})^*=e^{t\JJgen^*}$ is analytic, so its null space 
$Z(e^{t\JJgen^*})=\{0\}$ by Proposition~\ref{JJinj-prop}, whence
$D(e^{-t\JJgen})$ is dense in $H$.

A partial group phenomenon and commutation properties are restated here:

\begin{proposition}{\cite[Prop.\;2]{JJChJo18ax}}  \label{JJinverse-prop}
The above inverses $e^{-t\JJgen}$ form a semigroup of unbounded operators in $H$,
\begin{equation} 
  e^{-s\JJgen}e^{-t\JJgen}= e^{-(s+t)\JJgen} \qquad \text{for $t, s\ge0$}.
\end{equation}
This extends to $(s,t)\in\JJR\times \,]-\infty,0]$, where $e^{-(t+s)\JJgen}$  may be unbounded for $t+s>0$. 
Moreover, as unbounded operators the $e^{-t\JJgen}$ commute with $e^{s \JJgen}\in \JJB(H)$, i.e.,
\begin{equation}
  e^{s \JJgen}e^{-t\JJgen}\subset e^{-t\JJgen}e^{s\JJgen} \qquad \text{for $t,s\ge0$},
\end{equation}
and have a descending chain of domains,
$  D(e^{-t'\JJgen}) \subset D(e^{-t\JJgen}) \subset H$ for $0<t<t'$.
\end{proposition}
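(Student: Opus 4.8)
The plan is to deduce all four assertions from what is already in place: the semigroup identity $e^{s\JJgen}e^{t\JJgen}=e^{(s+t)\JJgen}$ in $\JJB(H)$ for $s,t\ge0$, the commutativity of these bounded factors, and the injectivity of every $e^{t\JJgen}$ (Proposition~\ref{JJinj-prop}, applied to $H$ and, where needed, to $H$ with $\JJgen$). The one auxiliary fact I would isolate first is purely algebraic: if $S,T\in\JJB(H)$ are injective, then $ST$ is injective and, as a generally unbounded operator, $(ST)^{-1}=T^{-1}S^{-1}$ with domain $R(ST)$; and if in addition $ST=TS$, then also $(ST)^{-1}=S^{-1}T^{-1}$, both having domain $R(ST)=R(TS)$. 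Its proof is a routine chase through compositions of (possibly unbounded) operators, using associativity of composition together with the asymmetry that $S^{-1}S=I$ on all of $H$ while $SS^{-1}=I$ only on $R(S)$; but this fact is the workhorse for everything that follows.

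Granting it, the semigroup law for the inverses when $s,t\ge0$ is immediate: taking $S=e^{s\JJgen}$ and $T=e^{t\JJgen}$ gives $e^{-(s+t)\JJgen}=(e^{s\JJgen}e^{t\JJgen})^{-1}=e^{-t\JJgen}e^{-s\JJgen}=e^{-s\JJgen}e^{-t\JJgen}$, the last two agreeing because the bounded factors commute, and the common domain being $R(e^{(s+t)\JJgen})$ --- which one confirms by writing out $D(e^{-s\JJgen}e^{-t\JJgen})=\{x\in R(e^{t\JJgen})\mid e^{-t\JJgen}x\in R(e^{s\JJgen})\}$ and recognising it as $R(e^{s\JJgen}e^{t\JJgen})$.

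For the extension to $(s,t)\in\JJR\times\,]-\infty,0]$ I would argue by cases, noting that $-t\ge0$ keeps $e^{-t\JJgen}$ bounded throughout. If $s\le0$, all three operators are bounded semigroup members and the identity is just the ordinary semigroup law. If $s>0$, then $e^{-s\JJgen}=(e^{s\JJgen})^{-1}$, and one splits further according to the sign of $s+t$; in each sub-case one factors a suitable bounded semigroup member as a product of two bounded semigroup members so that the unbounded factor $(e^{s\JJgen})^{-1}$ cancels against an adjacent $e^{s\JJgen}$ (using $(e^{s\JJgen})^{-1}e^{s\JJgen}=I$ on $H$), the remaining bounded factors being handled by the ordinary semigroup law and by cancelling an injective bounded operator exactly as in the auxiliary fact; the domains match by the same bookkeeping. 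That $e^{-(s+t)\JJgen}$ can genuinely be unbounded when $s+t>0$ is then recorded as a feature of the general situation --- e.g.\ for the Dirichlet heat semigroup, whose inverse is unbounded in view of Weyl's law \eqref{JJWeyl-id} --- rather than something requiring proof here.

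Finally, for the commutation relation with $s,t\ge0$: one has $D(e^{s\JJgen}e^{-t\JJgen})=D(e^{-t\JJgen})=R(e^{t\JJgen})$, whereas $D(e^{-t\JJgen}e^{s\JJgen})=\{x\in H\mid e^{s\JJgen}x\in R(e^{t\JJgen})\}$, which contains $R(e^{t\JJgen})$ since $e^{s\JJgen}e^{t\JJgen}y=e^{t\JJgen}e^{s\JJgen}y$; and on the smaller domain, writing $x=e^{t\JJgen}y$, both compositions return $e^{s\JJgen}y$, so the asserted inclusion holds (and is in general proper, namely whenever $e^{s\JJgen}$ carries some vector outside $R(e^{t\JJgen})$ back into it). The descending chain of domains follows from $e^{t'\JJgen}=e^{t\JJgen}e^{(t'-t)\JJgen}$ for $0<t<t'$: then $D(e^{-t'\JJgen})=R(e^{t'\JJgen})=e^{t\JJgen}\bigl(R(e^{(t'-t)\JJgen})\bigr)\subseteq R(e^{t\JJgen})=D(e^{-t\JJgen})$. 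The one real obstacle throughout is not depth but discipline: carefully tracking the domains of compositions of unbounded operators, and keeping the case analysis in the $\JJR\times\,]-\infty,0]$ extension from proliferating.
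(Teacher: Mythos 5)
Your argument is correct, and all four assertions are handled with the right care about domains of compositions of unbounded operators: the auxiliary fact $(ST)^{-1}=T^{-1}S^{-1}$ for injective $S,T\in\JJB(H)$, the identification $D(e^{-s\JJgen}e^{-t\JJgen})=R(e^{(s+t)\JJgen})$, the cancellation $(e^{s\JJgen})^{-1}e^{s\JJgen}=I$ on $H$ combined with injectivity of the remaining bounded factor in the mixed-sign cases, and the range inclusion $R(e^{t'\JJgen})\subseteq R(e^{t\JJgen})$ are exactly the ingredients needed. Note, however, that this paper gives no proof of the proposition at all -- it is restated verbatim from \cite[Prop.~2]{JJChJo18ax} -- so there is no in-paper argument to compare against; your proof is the natural one and is consistent with the cited source.
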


The above domains serve as basic structures for the final value problem \eqref{JJheat-intro}.

\subsection{The Abstract Final Value Problem} \label{JJafvp-ssect}
The basic analysis is made for a
Lax--Milgram operator $A$ defined in $H$ from a $V$-elliptic sesquilinear form $a(\cdot,\cdot)$ 
in a Gelfand triple, i.e.,
in a set-up of three Hilbert spaces $V\hookrightarrow H\hookrightarrow V^*$ 
having the norms $\|\cdot\|$, $|\cdot|$ and $\|\cdot\|_*$, respectively. Hereby $V$ is the form domain of $a$. 
Specifically there are constants $C_j$ such that,  for all $u, v\in V$, one has $\| v\|_*\le C_1|v|\le C_2 \| v\|$ and $|a(u,v)|\le
C_3\|u\|\,\|v\|$ and $\Re a(v,v)\ge C_4\|v\|^2$. In fact, $D(A)$ consists of those $u\in V$ for
which $a(u,v)=\JJscal{f}{v}$ for some $f\in H$ holds for all $v\in V$; then $Au=f$. It is also recalled
that there is a bounded bijective extension $A\colon V\to V^*$ given by $\JJdual{Au}{v}=a(u,v)$ for
$u,v\in V$. (The reader may consult \cite[Ch.\ 12]{JJG09}, \cite{JJHel13} or \cite{JJChJo18ax} for more
details on the set-up and basic properties of the unbounded, but closed operator $A$ in $H$.) 

In this framework, the general final value problem is as follows: \emph{for given data  
$f\in L_2(0,T; V^*)$ and $u_T\in H$,
determine the  $u\in{\cal D}'(0,T;V)$ such that}
\begin{equation}
  \label{JJeq:fvA-intro}
  \left\{
  \begin{aligned}
  \partial_tu +Au &=f  &&\quad \text{in ${\cal D}'(0,T;V^*)$},
\\
  u(T)&=u_T &&\quad\text{in $H$}.
\end{aligned}
\right.
\end{equation}
By definition of the vector distribution space ${\cal D}'(0,T;V^*)$, cf.\ \cite{JJSwz66}, the above equation means that 
$\JJdual{u}{-\varphi'}+\JJdual{A u}{\varphi}=\JJdual{f}{\varphi}$ holds as an identity in $V^*$
for every scalar test function $\varphi\in C_0^\infty(]0,T[)$.

A wealth of parabolic Cauchy problems with homogeneous boundary 
conditions have been treated via triples $(H,V,a)$ and the ${\cal D}'(0,T;V^*)$
set-up in \eqref{JJeq:fvA-intro}; cf.\ the
work of Lions and Magenes~\cite{JJLiMa72}, Tanabe~\cite{JJTan79}, Temam~\cite{JJTem84}, Amann \cite{JJAma95}.

To compare \eqref{JJeq:fvA-intro} with the Cauchy problem for $u'+Au=f$
obtained from the initial condition $u(0)=u_0\in H$, for some $f\in L_2(0,T;V^*)$, it is recalled that there is a
unique solution $u$ in the Banach space 
\begin{equation}
  \begin{split}
  X=&L_2(0,T;V)\bigcap C([0,T];H) \bigcap H^1(0,T;V^*),
\\
  \|u\|_X=&\big(\int_0^T \|u(t)\|^2\,dt+\sup_{0\le t\le T}|u(t)|^2+\int_0^T (\|u(t)\|_*^2   +\|u'(t)\|_*^2)\,dt\big)^{1/2}.   
  \end{split}
  \label{JJeq:X}
\end{equation}
For \eqref{JJeq:fvA-intro} it would thus be natural to envisage solutions $u$ in the same space $X$.
This turns out to be true, but only under substantial further conditions on the data $(f, u_T)$. 

To formulate these, it is exploited that $\JJgen=-A$ generates an analytic semigroup $e^{-zA}$ in
$\JJB(H)$, where $z\in S_{\theta}$ for $\theta=\operatorname{arccot}(C_3/C_4)$. 
This is classical, but crucial for the entire analysis (\cite[Lem.\ 4]{JJChJo18ax} has a verification of (i), hence of (ii), in
Proposition~\ref{JJPazy'-prop}). By Proposition~\ref{JJinj-prop}, it therefore
has the inverse $(e^{-tA})^{-1}=e^{tA}$ for $t>0$.

Its domain is the Hilbert space $D(e^{tA})=R(e^{-tA})$ with $\|u\|=(|u|^2+|e^{tA}u|^2)^{1/2}$.
In \cite[Prop.\ 11]{JJChJo18ax} it was shown that a non-empty spectrum,  $\sigma(A)\ne\emptyset$,
yields strict inclusions, as one could envisage,
\begin{equation} \label{JJdom-intro}
  D(e^{t'A})\subsetneq D(e^{t A})\subsetneq H \qquad\text{ for  $0<t<t'$}.
\end{equation}
This follows from the injectivity of $e^{-tA}$, using some well-known result for semigroups that
may be found in \cite{JJPaz83}; cf.\ \cite[Thm.\ 11]{JJChJo18ax} for details.

For $t=T$ these domains enter decisively in the well-posedness result below, where
condition \eqref{JJeq:cc-intro} is a fundamental clarification for the class of final value problems \eqref{JJeq:fvA-intro}.
But it also has important implications for parabolic differential equations. 

Another ingredient is the full yield $y_f$ of the source term $f$, namely
\begin{equation} \label{JJyf-eq}
  y_f= \int_0^T e^{-(T-s)A}f(s)\,ds.
\end{equation}
Hereby it is used that $e^{-tA}$ extends to an analytic semigroup in  $V^*$,
as the extension $A\in\JJB(V,V^*)$ is an unbounded operator in the Hilbertable space $V^*$ satisfying the necessary
estimates; cf.\ \cite[Lem.\ 4]{JJChJo18ax}.
Hence $y_f$ is a priori a vector in $V^*$, but it belongs to $H$ in view of \eqref{JJeq:X},
as it is the final state of a solution of the Cauchy problem with $u_0=0$. Moreover, 
the Closed Range Theorem implies, cf.\ \cite[Prop.\ 5]{JJChJo18ax}, 
that the operator $f\mapsto y_f$ is a continuous \emph{surjection} $y_f\colon
L_2(0,T;V^*)\to H$.

These remarks on $y_f$ make it clear that the difference in \eqref{JJeq:cc-intro} is meaningful in $H$:

\begin{theorem} \label{JJintro-thm}
  Let $A$ be a $V$-elliptic Lax--Milgram operator defined from a triple $(H,V,a)$ as above.
  Then the abstract final value problem \eqref{JJeq:fvA-intro} has a solution
  $u(t)$ belonging the space $X$ in \eqref{JJeq:X}, if and only if the data
  $(f,u_T)$ belong to the subspace 
  \begin{equation}
    Y\subset L_2(0,T; V^*)\oplus H
  \end{equation}
  defined  by the condition  
  \begin{equation}
    \label{JJeq:cc-intro}
    u_T-\int_0^T e^{-(T-t)A}f(t)\,dt \ \in\  D(e^{TA}).
  \end{equation}  
In the affirmative case, the solution $u$ is uniquely determined in $X$ and 
\begin{equation}
  \label{JJeq:Y-intro}
  \begin{split}
      \|u\|_{X}& \le
  c \Big(|u_T|^2+\int_0^T\|f(t)\|_*^2\,dt+\Big|e^{TA}\big(u_T-\int_0^Te^{-(T-t)A}f(t)\,dt\big)\Big|^2\Big)^{\frac12}
\\
   &=c \|(f,u_T)\|_Y.    
  \end{split}
\end{equation}
whence the solution operator $(f,u_T)\mapsto u$ is continuous $Y\to X$. Moreover,
\begin{equation} \label{JJeq:fvp_solution}
  u(t) = e^{-tA}e^{TA}(u(T)-y_f) + \int_0^t e^{-(t-s)A}f(s) \,ds,
\end{equation}
where all terms belong to $X$ as functions of $t\in[0,T]$.
\end{theorem}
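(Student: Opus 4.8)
The plan is to prove Theorem~\ref{JJintro-thm} by reducing the final value problem \eqref{JJeq:fvA-intro} to the already-solved Cauchy problem via the duality between a prescribed terminal state and a suitable initial state. First I would establish the \emph{necessity} of the compatibility condition \eqref{JJeq:cc-intro}. Suppose $u\in X$ solves \eqref{JJeq:fvA-intro}. Splitting $u=v+w$, where $v$ solves the Cauchy problem $v'+Av=f$, $v(0)=0$ (so $v\in X$ by the quoted Cauchy theory, and $v(T)=y_f$), the difference $w=u-v$ solves the \emph{homogeneous} equation $w'+Aw=0$ with $w\in X$ and terminal value $w(T)=u_T-y_f$. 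For the homogeneous equation, uniqueness of the Cauchy problem forces $w(t)=e^{-tA}w(0)$, and evaluating at $t=T$ gives $u_T-y_f=e^{-TA}w(0)$, i.e.\ $u_T-y_f\in R(e^{-TA})=D(e^{TA})$, which is exactly \eqref{JJeq:cc-intro}. Here the injectivity of the analytic semigroup (Proposition~\ref{JJinj-prop}) is what makes $e^{TA}$ well defined as the inverse, so that $w(0)=e^{TA}(u_T-y_f)$ is unambiguous.

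Next I would prove \emph{sufficiency} together with the representation formula \eqref{JJeq:fvp_solution}. Assume $(f,u_T)\in Y$, so $u_T-y_f\in D(e^{TA})$. Set $u_0=e^{TA}(u_T-y_f)\in H$ and let $u$ be the unique solution in $X$ of the Cauchy problem $u'+Au=f$, $u(0)=u_0$; the quoted theory gives $u\in X$ and the Duhamel formula $u(t)=e^{-tA}u_0+\int_0^t e^{-(t-s)A}f(s)\,ds$. Substituting $u_0=e^{TA}(u_T-y_f)$ and using $e^{-tA}e^{TA}\subset e^{(T-t)A}$-type commutation from Proposition~\ref{JJinverse-prop}, I get precisely \eqref{JJeq:fvp_solution}. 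To check this $u$ actually has terminal value $u_T$, evaluate at $t=T$: the integral term becomes $y_f$, and $e^{-TA}e^{TA}(u_T-y_f)=u_T-y_f$ because $u_T-y_f\in D(e^{TA})=R(e^{-TA})$ means $e^{-TA}$ and $e^{TA}$ genuinely invert each other on this element. Hence $u(T)=(u_T-y_f)+y_f=u_T$, so $u$ solves \eqref{JJeq:fvA-intro}. Uniqueness in $X$ follows from the necessity argument applied to a difference of two solutions: it solves the homogeneous problem with zero terminal data, hence by the above $w(0)=e^{TA}\cdot 0=0$, so $w\equiv 0$.

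Finally, for the stability estimate \eqref{JJeq:Y-intro}, I would combine three continuity facts: the solution operator $(u_0,f)\mapsto u$ for the Cauchy problem is bounded $H\oplus L_2(0,T;V^*)\to X$ (quoted), the yield map $f\mapsto y_f$ is bounded $L_2(0,T;V^*)\to H$ (quoted, via the Closed Range Theorem), and by construction $u_0=e^{TA}(u_T-y_f)$. Thus $\|u\|_X\le c(|u_0|+\|f\|_{L_2(0,T;V^*)})$, and $|u_0|^2=\big|e^{TA}(u_T-y_f)\big|^2$ is exactly the third term in \eqref{JJeq:Y-intro}; since the $X$-norm controls $\sup_t|u(t)|\ge|u_T|$ one also recovers the $|u_T|$ term, and together these give the two-sided comparability of $\|u\|_X$ with $\|(f,u_T)\|_Y$. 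The space $Y$ is then, by definition, the set of $(f,u_T)$ satisfying \eqref{JJeq:cc-intro}, normed by the right-hand side of \eqref{JJeq:Y-intro}, and it is a Hilbertable space because $D(e^{TA})$ carries the graph Hilbert norm. The main obstacle I anticipate is the careful bookkeeping with the \emph{unbounded} operators $e^{TA}$ and $e^{-TA}$: one must consistently track domains so that every composition $e^{-tA}e^{TA}$, $e^{-TA}e^{TA}$ is applied only to elements in the appropriate domain, and verify that all terms in \eqref{JJeq:fvp_solution} really lie in $X$ as functions of $t$ — this requires knowing that $t\mapsto e^{-tA}u_0$ belongs to $X$ for $u_0\in H$, which is again part of the Cauchy theory but must be invoked with the right space in mind (including the $V^*$-valued extension of the semigroup used to make sense of $y_f$).
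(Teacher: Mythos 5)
Your overall architecture coincides with the paper's: reduce to the Cauchy problem, use the flow map $u(0)\mapsto u(T)=e^{-TA}u(0)+y_f$ and invert it by $e^{TA}$ to get necessity, sufficiency, the representation \eqref{JJeq:fvp_solution}, uniqueness, and the estimate from the Cauchy stability bound of Proposition~\ref{JJpest-prop}. Your necessity argument via the splitting $u=v+w$ with $w$ solving the homogeneous equation is a harmless cosmetic variant of the paper's direct use of \eqref{JJu0uT-id}.

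However, there is one genuine gap: both your necessity step (``uniqueness of the Cauchy problem forces $w(t)=e^{-tA}w(0)$'') and your sufficiency step (``the quoted theory gives $u\in X$ and the Duhamel formula'') treat the variation-of-constants identity \eqref{JJu-id} as part of the quoted Cauchy theory. It is not. Theorem~\ref{JJthm:Temam} (Lions--Magenes) gives existence and uniqueness of a solution in $X$ but does not identify that solution with the semigroup/Duhamel expression; the classical proofs of the Duhamel formula require $f$ to be H\"older continuous, whereas here $f\in L_2(0,T;V^*)$ and $X$ contains non-classical solutions. Even in your homogeneous case one must verify that $t\mapsto e^{-tA}w(0)$ actually lies in $X$ (in particular in $L_2(0,T;V)\cap H^1(0,T;V^*)$) and solves the equation in ${\cal D}'(0,T;V^*)$ before uniqueness can be invoked, and this is not immediate for arbitrary $w(0)\in H$. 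Closing this gap is precisely the content of Theorem~\ref{JJDuhamel-thm}: one applies the integrating factor $e^{-(T-t)A}$ to the equation satisfied by the given $X$-solution, uses the Fundamental Theorem for vector functions to obtain \eqref{JJeq:identityT}, and then cancels $e^{-(T-t)A}$ by the injectivity of Proposition~\ref{JJinj-prop}; the membership $e^{-tA}u_0\in X$ is obtained as a byproduct. Without this lemma your argument is circular at its central step. A minor further point: your closing claim of two-sided comparability of $\|u\|_X$ and $\|(f,u_T)\|_Y$ is more than the theorem asserts; the stated estimate \eqref{JJeq:Y-intro} only needs the upper bound, obtained by inserting $u_0=e^{TA}(u_T-y_f)$ into Proposition~\ref{JJpest-prop} and adding $|u_T|^2$ on the right.
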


The norm on the data space $Y$ in \eqref{JJeq:Y-intro} is seen at once to be the graph norm of the composite map
\begin{equation}
  L_2(0,T; V^*)\oplus H \xrightarrow[\;]{\quad \Phi\quad} H\xrightarrow[\;]{\quad e^{TA}\quad} H
\end{equation}
given by $(f,u_T)\ \mapsto u_T-y_f\ \mapsto \ e^{TA}(u_T-y_f)$.
In fact,  \eqref{JJeq:cc-intro} means that the operator $e^{TA}\Phi$ must be defined at
$(f,u_T)$, so the data space $Y$ is its domain. Being an inverse, $e^{TA}$ is a closed operator, and so is
$e^{TA}\Phi$; hence $Y=D(e^{TA}\Phi)$ is complete. Consequently $Y$ is a Hilbertable 
space (like $V^*$).

Thus the unbounded operator $e^{TA}\Phi$ is a key ingredient in the rigorous treatment of \eqref{JJeq:fvA-intro}.
In control theoretic terms its role is to provide a unique initial state
\begin{equation}
  u(0)=e^{TA}\Phi(f,u_T)
\end{equation}
that is steered by $f$ to the final state $u(T)=u_T$ at time $T$;
cf.~\eqref{JJeq:bijection-intro} below.

Because of $e^{-(T-t)A}$ and the integral over $[0,T]$, condition
\eqref{JJeq:cc-intro} clearly involves \emph{non-local} operators in both space and time as an inconvenient
aspect\,---\,which is exacerbated by the abstract domain $D(e^{TA})$ that for longer lengths $T$ of 
the time interval gives increasingly stricter conditions; cf.\ \eqref{JJdom-intro}. 

Anyhow, \eqref{JJeq:cc-intro} is a \emph{compatibility} condition on
the data $(f,u_T)$, and thus the notion of compatibility is generalised.
For comparison it is recalled that Grubb and Solonnikov~\cite{JJGrSo90} systematically investigated
a large class of \emph{initial}-boundary problems of parabolic (pseudo-)differential equations
and worked out compatibility conditions, which are necessary and sufficient for
well-posedness in full scales of anisotropic $L_2$-Sobolev spaces. Their conditions are explicit
and local at the curved corner $\partial\Omega\times\{0\}$, except for 
half-integer values of the smoothness $s$ that were shown to require so-called coincidence, which 
is expressed in integrals over the product of the two boundaries $\{0\}\times\Omega$ and
$\,]0,T[\,\times\,\partial\Omega$; hence it also is a non-local condition.  
However, whilst their conditions are decisive for the solution's regularity, 
the above condition \eqref{JJeq:cc-intro} is crucial for the \emph{existence} question; cf.\ the theorem.

\bigskip
Previously, uniqueness in \eqref{JJeq:fvA-intro} was shown by Amann~\cite[Sect.~V.2.5.2]{JJAma95} in a $t$-dependent set-up, 
but injectivity of the map $u(0)\mapsto u(T)$ was proved much earlier for problems with $t$-dependent
sesquilinear forms by Lions and Malgrange~\cite{JJLiMl60}.

Showalter~\cite{JJSho74} strived to characterise the possible $u_T$ via Yosida
approximations for $f=0$ and $A$ having half-angle $\frac\pi4$.
Invertibility of $e^{-tA}$  was claimed for this purpose in \cite{JJSho74} for such $A$ (but, as
mentioned, not quite obtained).

To make a few more remarks, it is noted that the proof given below exploits that the solution $u$
also in this set-up necessarily is given by Duhamel's principle, or the 
variation of constants formula, for the analytic semigroup $e^{-tA}$ in $V^*$, 
\begin{equation}
  \label{JJeq:bijection-intro}
  u(t)=e^{-tA}u(0)+ \int_0^t e^{-(t-s)A}f(s)\,ds.
\end{equation}
For $t=T$ this yields a \emph{bijection} $u(0)\leftrightarrow u(T)$ between the initial
and terminal states; in particular backwards uniqueness of the solutions holds in the large class $X$.
Of course, this relies crucially on the invertibility of $e^{-tA}$ in Proposition~\ref{JJinj-prop}.

Now, \eqref{JJeq:bijection-intro} also shows that $u(T)$ consists of two 
parts, that differ radically even when $A$ has nice properties: 
First, the integral amounts to $y_f$ for $t=T$, and by the mentioned surjectivity this terms can be
\emph{anywhere} in $H$.

Secondly, $e^{-tA}u(0)$ solves $u'+Au=0$, and for $u(0)\ne0$ there is 
the precise property in non-selfadjoint dynamics that the ``height'' function $h(t)= |e^{-tA}u(0)|$ is
\begin{equation}
  \text{strictly positive ($h>0$), strictly decreasing ($h'<0$)  and \emph{strictly convex}}  .
\end{equation}
Whilst this holds if $A$ is self-adjoint or normal, it was emphasized in \cite{JJChJo18ax}
that it suffices that $A$ is just hyponormal. Recently this was
followed up by the author in \cite{JJ18logconv}, where the stronger 
logarithmic convexity of $h(t)$ was proved \emph{equivalent} to the weaker property of $A$ that 
$2(\Re\JJscal{A x}{x})^2\le \Re\JJscal{A^2x}{x}|x|^2+|A x|^2|x|^2$ for $x\in D(A^2)$.

The stiffness inherent in \emph{strict} convexity reflects that $u(T)=e^{-TA}u(0)$ is
confined to a dense, but very small space, as by the analyticity
\begin{equation}
  \label{JJDAn-cnd}
  u(T)\in \textstyle{\bigcap_{n\in\JJN}} D(A^n).
\end{equation}
For $u'+Au=f$, the possible final data
$u_T$ will hence be a sum of an arbitrary vector $y_f$ in $H$ and a
term $e^{-TA}u(0)$ of great stiffness, cf.~\eqref{JJDAn-cnd}. Thus $u_T$ can be prescribed in the affine space
$y_f+D(e^{TA})$. As any $y_f\ne0$ will shift $D(e^{TA})\subset H$
in some arbitrary direction, $u(T)$ can be expected \emph{anywhere} in $H$ (unless $y_f\in D(e^{TA})$ is known).
So neither $u(T)\in D(e^{TA})$ nor \eqref{JJDAn-cnd} can be expected to hold for $y_f\ne0$---not even
if $|y_f|$ is much smaller than $|e^{-TA}u(0)|$. In view of this conclusion, it seems best for final value problems to
consider inhomogeneous problems from the very beginning.

\section{Proof of Theorem~\ref{JJintro-thm}}
  \label{JJaproof-sect}
The point of departure is the following well-known result, which is formulated as a theorem here
only to indicate that it is a cornerstone in the proof. It is also emphasized that the Lax--Milgram
operator $A$ need not be selfadjoint.
 
\begin{theorem}  \label{JJthm:Temam}
Let $V$ be a separable Hilbert space with $V \subseteq H$ algebraically, topologically and densely,
and let $A$ denote the Lax--Milgram operator induced by a $V$-elliptic
sesquilinear form, as well as its extension $A\in\JJB(V,V^*)$, cf.\ Section~\ref{JJafvp-ssect}. 
When $u_0 \in H$ and $f \in L_2(0,T; V^*)$ are given, then the Cauchy problem 
\begin{equation}
  \left\{
  \begin{aligned}
  \partial_tu +Au &=f  \quad \text{in ${\cal D}'(0,T;V^*)$},
\\
  \qquad u(0)&=u_0 \quad\text{in $H$}, 
  \end{aligned}
  \right.
  \label{JJivp-id}
\end{equation}
has a uniquely determined solution $u(t)$ belonging to the space $X$ in \eqref{JJeq:X}.
\end{theorem}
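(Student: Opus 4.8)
The plan is to prove existence by the Faedo--Galerkin method, the classical route to \eqref{JJivp-id} in the ${\cal D}'(0,T;V^*)$ framework used by Lions--Magenes~\cite{JJLiMa72}, Temam~\cite{JJTem84} and others. Since $V$ is separable I would fix a sequence $w_1,w_2,\dots$ in $V$ that is linearly independent with dense span in $V$ (hence in $H$), put $V_m=\operatorname{span}\{w_1,\dots,w_m\}$, and seek $u_m(t)=\sum_{j=1}^m g_{jm}(t)w_j$ solving the finite system
\begin{equation}
  \JJscal{u_m'(t)}{w_k}+a(u_m(t),w_k)=\JJdual{f(t)}{w_k},\qquad k=1,\dots,m,\qquad u_m(0)=u_{0m},
\end{equation}
where $u_{0m}\to u_0$ in $H$. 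The Gram matrix $(\JJscal{w_j}{w_k})_{j,k\le m}$ being invertible, this is a linear ODE system with $L_2$ coefficients, so Carath\'eodory's theorem yields a unique absolutely continuous solution on $[0,T]$.

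Next I would establish the basic energy estimate: multiplying the $k$-th equation by $\overline{g_{km}(t)}$, summing over $k$ and taking real parts gives $\tfrac12\tfrac{d}{dt}|u_m(t)|^2+\Re a(u_m(t),u_m(t))=\Re\JJdual{f(t)}{u_m(t)}$, and then $V$-ellipticity $\Re a(v,v)\ge C_4\|v\|^2$, the bound $|\JJdual{f}{v}|\le\|f\|_*\|v\|$ and Young's inequality, followed by integration, bound $u_m$ in $L_\infty(0,T;H)\cap L_2(0,T;V)$ by a constant times $(|u_0|^2+\int_0^T\|f(t)\|_*^2\,dt)^{1/2}$. Hence some subsequence converges weakly in $L_2(0,T;V)$ and weakly-$*$ in $L_\infty(0,T;H)$ to some $u$. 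Fixing $k$ and $\psi\in C^1([0,T])$ with $\psi(T)=0$, integrating the $k$-th Galerkin identity against $\psi$, integrating by parts in $t$, and letting $m\to\infty$ (using the weak convergences and $u_{0m}\to u_0$) yields
\begin{equation}
  -\int_0^T\JJscal{u(t)}{w_k}\psi'(t)\,dt+\int_0^T a(u(t),w_k)\psi(t)\,dt=\int_0^T\JJdual{f(t)}{w_k}\psi(t)\,dt+\JJscal{u_0}{w_k}\psi(0),
\end{equation}
first for every $k$, then for every $v\in V$ by density. Choosing $\psi\in C_0^\infty(]0,T[)$ shows $u'+Au=f$ in ${\cal D}'(0,T;V^*)$.

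The remaining regularity and the initial condition follow by bootstrapping. Since $A\in\JJB(V,V^*)$ and $u\in L_2(0,T;V)$, the equation gives $u'=f-Au\in L_2(0,T;V^*)$, so $u\in L_2(0,T;V)\cap H^1(0,T;V^*)$; the standard embedding lemma (see \cite{JJLiMa72,JJTem84}) then gives $u\in C([0,T];H)$, so $u\in X$, and the identity $\tfrac{d}{dt}|u(t)|^2=2\Re\JJdual{u'(t)}{u(t)}$ holds in ${\cal D}'(0,T)$. Integrating $u'$ by parts against $\psi w_k$ (now legitimate) and comparing with the limit identity above yields $\JJscal{u(0)}{w_k}=\JJscal{u_0}{w_k}$ for all $k$, hence $u(0)=u_0$. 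For uniqueness, linearity reduces to $f=0$, $u_0=0$: then the energy identity gives $\tfrac{d}{dt}|u(t)|^2=-2\Re a(u(t),u(t))\le-2C_4\|u(t)\|^2\le0$ with $|u(0)|=0$, so $u\equiv0$ in $X$. The same estimates, together with lower semicontinuity of the norms under weak limits and the equation $u'=f-Au$, also deliver the bound $\|u\|_X\le c(|u_0|^2+\int_0^T\|f(t)\|_*^2\,dt)^{1/2}$.

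The step I expect to require the most care is not any single inequality but the interface between the weak-limit construction and the $C([0,T];H)$-regularity: the Galerkin bounds only furnish $L_2(0,T;V)\cap L_\infty(0,T;H)$, so one must invoke the equation itself and the embedding $\{v\in L_2(0,T;V):v'\in L_2(0,T;V^*)\}\hookrightarrow C([0,T];H)$ to upgrade this and to make sense of, and verify, the attainment of $u_0$. Handling the (possibly non-selfadjoint) complex form $a$ when passing to real parts also needs a little attention, although $V$-ellipticity keeps the energy estimate clean without any exponential weight.
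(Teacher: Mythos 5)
Your proposal is correct, and it is the standard Faedo--Galerkin argument for exactly the classical result that the paper does not prove but imports from Lions--Magenes~\cite{JJLiMa72} (Sect.~3.4.4); so in substance you are supplying the proof the paper points to rather than taking a different route. The ingredients you flag as delicate are indeed the right ones and reappear elsewhere in the paper: the embedding $L_2(0,T;V)\cap H^1(0,T;V^*)\hookrightarrow C([0,T];H)$ is inequality \eqref{JJSobolev-ineq}, and your energy estimate together with $u'=f-Au$ is precisely how the stability bound is derived in Proposition~\ref{JJpest-prop}. The only point worth double-checking in a write-up is the complex, non-selfadjoint bookkeeping: the Galerkin equations must be multiplied by $\overline{g_{km}(t)}$ and summed using that both $\JJscal{\cdot}{\cdot}$ and $a(\cdot,\cdot)$ are conjugate-linear in the second slot, which you have done correctly.
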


This is a special case of a classical result of Lions and Magenes~\cite[Sect.~3.4.4]{JJLiMa72} on 
$t$-dependent forms $a(t;u,v)$. The conjunction of  the properties 
$u\in L_2(0,T;V)$ and $u'\in L_2(0,T;V^*)$, which appears in \cite{JJLiMa72}, is clearly equivalent to the property
in \eqref{JJeq:X} that $u$ belongs to the intersection of $L_2(0,T,V)$ and $H^1(0,T;V^*)$.

To clarify a redundancy, it is first noted that in Theorem~\ref{JJthm:Temam}
the solution space $X$ is a Banach space, which can have its norm in 
\eqref{JJeq:X} written in the form
\begin{align}  \label{JJeq:Xnorm}
\|u\|_{X} = \big(\|u\|^2_{L_2(0,T;V)} + \sup_{0 \leq t \leq T}|u(t)|^2 + \|u\|^2_{H^1(0,T;V^*)}\big)^{1/2}.
\end{align}
Here there is a well-known inclusion $L_2(0,T;V)\cap H^1(0,T;V^*)\subset C([0,T];H)$ and an associated
Sobolev inequality for vector functions
(\cite{JJChJo18ax} has an elementary proof)
  \begin{equation} \label{JJSobolev-ineq}
  \sup_{0\le t\le T}| u(t)|^2\le (1+\frac{C_2^2}{C_1^2T})\int_0^T \|u\|^2\,dt+\int_0^T \|u'\|_*^2\,dt.
  \end{equation}
Hence one can safely omit the space $C([0,T];H)$ in  \eqref{JJeq:X}.
Likewise $\sup|u|$ can be removed from $\| \cdot\|_{X}$,
as one obtains an equivalent norm (similarly $\int_0^T\|u(t)\|_*^2\,dt$ is redundant
in \eqref{JJeq:X}). 
Thus $X$ is more precisely a Hilbertable space;
but \eqref{JJeq:X} is kept as stated in order to emphasize the properties of the solutions.

However, two refinements of the above theory is needed.
For one thing, the next result yields well-posedness of \eqref{JJivp-id}, which is a well-known corollary to the
proofs in \cite{JJLiMa72}. 
But a short explicit argument is also possible:

\begin{proposition}  \label{JJpest-prop}
The unique solution $u$ of \eqref{JJivp-id}, given by
Theorem \ref{JJthm:Temam}, depends
as an element of $X$ continuously on the data $(f,u_0) \in L_2(0,T;V^*)\oplus H$, i.e.
\begin{align}   
\|u\|^2_{X} \leq c (|u_0|^2 + \|f\|^2_{L_2(0,T;V^*)}).
\end{align}
\end{proposition}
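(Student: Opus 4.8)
The plan is to extract the stability estimate directly from the construction of the solution, using that every ingredient in the representation formula depends continuously on the data. First I would recall that by Duhamel's principle the solution has the form $u(t)=e^{-tA}u_0+\int_0^t e^{-(t-s)A}f(s)\,ds$, so it suffices to bound each of the two terms in the $X$-norm \eqref{JJeq:Xnorm} by the right-hand side. For the source term $w(t)=\int_0^t e^{-(t-s)A}f(s)\,ds$, this is the unique solution of \eqref{JJivp-id} with $u_0=0$, and the standard energy method applies: testing $w'+Aw=f$ against $w(t)\in V$, using $\Re a(w,w)\ge C_4\|w\|^2$ and $|\langle f,w\rangle|\le\|f\|_*\|w\|\le \tfrac{1}{2C_4}\|f\|_*^2+\tfrac{C_4}{2}\|w\|^2$, and integrating in $t$, yields $\sup_t|w(t)|^2+\int_0^T\|w\|^2\,dt\le c\int_0^T\|f\|_*^2\,dt$; then $\|w'\|_*\le\|f\|_*+\|Aw\|_*\le\|f\|_*+C_3\|w\|$ controls the $H^1(0,T;V^*)$-part, and \eqref{JJSobolev-ineq} is consistent with the sup-bound. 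This gives $\|w\|_X^2\le c\|f\|_{L_2(0,T;V^*)}^2$.

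Next I would treat the homogeneous part $v(t)=e^{-tA}u_0$, which solves $v'+Av=0$, $v(0)=u_0$. The same energy identity, now with $f=0$, gives $\tfrac{d}{dt}|v(t)|^2=-2\Re a(v,v)\le -2C_4\|v(t)\|^2\le 0$, so $\sup_t|v(t)|^2\le|u_0|^2$ and, integrating, $\int_0^T\|v\|^2\,dt\le\tfrac{1}{2C_4}|u_0|^2$; again $\|v'\|_*\le C_3\|v\|$ handles the remaining term. Hence $\|v\|_X^2\le c|u_0|^2$. Combining the two bounds via the triangle inequality in $X$ and squaring gives $\|u\|_X^2\le c(|u_0|^2+\|f\|_{L_2(0,T;V^*)}^2)$, which is the claim; alternatively, one observes that the solution operator $(f,u_0)\mapsto u$ is everywhere defined on the Banach space $L_2(0,T;V^*)\oplus H$ with values in the Banach space $X$ by Theorem~\ref{JJthm:Temam}, and is closed (its graph is cut out by the linear equation $u'+Au=f$ together with the trace condition $u(0)=u_0$, both of which are continuous on $X$), so the Closed Graph Theorem gives boundedness at one stroke.

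I would favour the explicit energy argument in the write-up, since it is short, self-contained, and keeps the constant $c$ traceable in terms of $C_1,\dots,C_4$ and $T$; the Closed Graph route is worth mentioning as it needs no computation but hides the dependence of $c$ on the data of the triple. The only mild technical point — not really an obstacle — is justifying the formal energy identity $\tfrac{d}{dt}|u(t)|^2=2\Re\langle u'(t),u(t)\rangle$ for $u\in L_2(0,T;V)\cap H^1(0,T;V^*)$; this is exactly the regularity encoded in $X$, and the identity is the classical one underlying the inclusion $X\subset C([0,T];H)$ and the Sobolev inequality \eqref{JJSobolev-ineq} already quoted above, so it may simply be invoked.
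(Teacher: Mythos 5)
Your proof is correct and its computational core -- the energy identity $\partial_t|u|^2=2\Re\JJdual{u'}{u}$, $V$-ellipticity plus Young's inequality, integration in $t$, recovering $\|u'\|_{L_2(0,T;V^*)}$ from the equation, and \eqref{JJSobolev-ineq} for the sup-term -- is exactly the paper's argument. The only difference is that the paper runs this estimate once, directly on $u$ with both $f$ and $u_0$ present, whereas you first split $u$ into the homogeneous part $e^{-tA}u_0$ and the source part and estimate each separately; this costs an extra step and buys nothing here. One caution about how you phrase the splitting: at this point in the paper the Duhamel representation \eqref{JJu-id} is \emph{not yet available} -- it is Theorem~\ref{JJDuhamel-thm}, proved afterwards, and in this low-regularity setting ($f\in L_2(0,T;V^*)$, solutions in $X$) its proof is nontrivial and hinges on the injectivity of $e^{-tA}$. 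Your estimates do not actually use the explicit semigroup formulas, only the decomposition of the solution by linearity and uniqueness from Theorem~\ref{JJthm:Temam}, so the appeal to ``Duhamel's principle'' should be replaced by that linearity argument to avoid a circular-looking reference. Your Closed Graph alternative is also valid (the graph is closed because $A\in\JJB(V,V^*)$ and $X\hookrightarrow C([0,T];H)$ make $u\mapsto u'+Au$ and $u\mapsto u(0)$ continuous on $X$), and it is a genuinely different, computation-free route; as you note, it hides the dependence of the constant on $C_1,\dots,C_4$ and $T$, which is why the paper prefers the explicit estimate.
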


Here and in the sequel, $c$ denotes as usual a constant in $\,]0,\infty[\,$ of unimportant value,
which moreover may change from place to place.

\begin{proof}
As $u \in L_2(0,T;V)$ while $u',f$ and $A u$ belong to the dual space $L_2(0,T;V^*)$, one has the identity 
$\Re\JJdual{\partial_t u}{u} + \Re\JJdual{A u}{u} = \Re\JJdual{f}{u}$ in $L_1$.
Now, a classical regularisation yields $\partial_t |u|^2=2\Re\JJdual{\partial_t u}{u}$, 
so by Young's inequality and the $V$-ellipticity,
\begin{align}
  \partial_t |u|^2 +  2C_4 \|u\|^2 \leq 2 |\JJdual{f}{u}| \leq C_4^{-1} \|f\|_{*}^2 + C_4 \|u\|^2.
\end{align}
Using again that $|u(t)|^2$ and $\partial_t |u(t)|^2 $ are in $L_1(0,T)$, integration yields
\begin{align}
  |u(t)|^2 + C_4 \int_0^t \|u(s)\|^2 \,ds \leq |u_0|^2 + C_4^ {-1}\|f\|_{L_2(0,T;V^*)}^2.
\end{align}
This gives $\|u\|_{L_2(0,T;V)}^2 \leq C_4^{-1}|u_0|^2 + C_4^{-2}\|f\|_{L_2(0,T;V^*)}^2$ for the
first term in $\|u\|_X$. 
As $u$ solves \eqref{JJivp-id}, it is clear that
$\|\partial_t u(t) \|_{*}^2 \leq (\|f(t)\|_{*} + \|A u \|_{*})^2 $,
hence
\begin{align}
  \int_0^T \|\partial_t u(t) \|_{*}^2 \,dt \leq 2 \int_0^T \|f(t)\|_{*}^2 \,dt 
  + 2 \|A\|_{\JJB(V,V^*)}^2  \int_0^T \|u\|^2 \,dt ,
\end{align}
where $\int_0^T \|u\|^2 \,dt$ is estimated above. Finally $\sup|u|$ can be covered via \eqref{JJSobolev-ineq}.
\end{proof}

Secondly, as $e^{-tA}$ extends to an analytic semigroup in $V^*$, cf.\ \cite[Lem.\ 5]{JJChJo18ax}, so
that $e^{-tA}f(t)$ is defined, Theorem~\ref{JJthm:Temam} can be supplemented by the 
explicit expression:
\begin{equation} \label{JJu-id}
  u(t) = e^{-tA}u_0 + \int_0^t e^{-(t-s)A}f(s) \,ds \qquad\text{for } 0\leq t\leq T.
\end{equation}
This is of course the Duhamel formula, but even for analytic semigroups
the proof that it does give a solution requires $f(t)$ to be H\"older continuous (\cite[Cor.\ 4.3.3]{JJPaz83} is slightly
more general, though), whereas above $f\in L_2(0,T;V^*)$. As the present space $X$
even contains non-classical solutions, \eqref{JJu-id} requires a new proof here---but it suffices to reinforce the
usual argument by the injectivity of $e^{-tA}$ in Proposition~\ref{JJinj-prop}:
 
\begin{theorem}   \label{JJDuhamel-thm}
The unique solution $u$ in $X$ provided by Theorem~\ref{JJthm:Temam} is given by \eqref{JJu-id},
where each of the three terms belongs to $X$.
\end{theorem}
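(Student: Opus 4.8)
The plan is to verify directly that the function $v(t)$ defined by the right-hand side of \eqref{JJu-id} is a solution of the Cauchy problem \eqref{JJivp-id} and then invoke the uniqueness in Theorem~\ref{JJthm:Temam} to conclude $u=v$. First I would split $v(t)=e^{-tA}u_0 + w(t)$ with $w(t)=\int_0^t e^{-(t-s)A}f(s)\,ds$, treating the two terms separately. The homogeneous term $e^{-tA}u_0$ is the standard orbit of the analytic semigroup through $u_0\in H$: it is smooth for $t>0$, solves $v'+Av=0$ classically, satisfies $v(0)=u_0$ by strong continuity, and lies in $X$ by the analyticity estimates of Proposition~\ref{JJPazy'-prop} (the $L_2(0,T;V)$ bound coming from $\int_0^T\|e^{-tA}u_0\|^2\,dt<\infty$, which follows from the form inequality $\Re a(v,v)\ge C_4\|v\|^2$ together with $\partial_t|e^{-tA}u_0|^2 = -2\Re a(e^{-tA}u_0,e^{-tA}u_0)$). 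For $w(t)$, the cleanest route is to \emph{not} differentiate under the integral sign directly (that is exactly the step requiring H\"older continuity of $f$), but instead to argue at the level of $L_2(0,T;V^*)$ using the weak formulation: for every $\varphi\in C_0^\infty(]0,T[)$ one computes $\JJdual{w}{-\varphi'}+\JJdual{Aw}{\varphi}$ by Fubini and the semigroup identity, reducing the inner integral to $\int_s^T (-\partial_t e^{-(t-s)A}+A e^{-(t-s)A})\varphi(t)\,dt = \int_s^T e^{-(t-s)A}(-\varphi'(t))\,dt$ after an integration by parts in $t$, and then recognising the remaining expression as $\JJdual{f}{\varphi}$. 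This shows $w'+Aw=f$ in $\mathcal D'(0,T;V^*)$ with $w(0)=0$.

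Next I would show $w\in X$. The key bound is $\|w\|_{L_2(0,T;V)}$: this is where I would use that $w$ is the Cauchy-problem solution with data $(f,0)$—but to invoke Proposition~\ref{JJpest-prop} one first needs to know $w$ \emph{is} that solution, which is what we are proving. To avoid circularity, I would instead establish the $X$-membership of $w$ directly: the mapping $f\mapsto w$ is the ``mild solution'' operator, and one estimates $\int_0^T\|w(t)\|^2\,dt$ by a direct Minkowski/Young argument against the analytic-semigroup kernel $\|A^{1/2}e^{-\tau A}\|_{\JJB(H)}\le c\tau^{-1/2}$ (or the $V^*$-analogue), which is an $L^1$ kernel in $\tau$ over $[0,T]$; by Young's convolution inequality this gives $\|w\|_{L_2(0,T;V)}\le c\|f\|_{L_2(0,T;V^*)}$. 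Once $w\in L_2(0,T;V)$ is known, $Aw\in L_2(0,T;V^*)$, so from $w'=f-Aw$ one gets $w'\in L_2(0,T;V^*)$, hence $w\in H^1(0,T;V^*)$ and then $w\in C([0,T];H)$ by \eqref{JJSobolev-ineq}; together these put $w$ in $X$. Adding the homogeneous term, $v=e^{-\cdot A}u_0+w\in X$.

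Finally, having shown $v\in X$ solves \eqref{JJivp-id} with data $(f,u_0)$, uniqueness in Theorem~\ref{JJthm:Temam} forces $u=v$, which is precisely \eqref{JJu-id}; and the decomposition $u(t)=e^{-tA}u_0+w(t)$ exhibits $u$ as a sum of two elements of $X$, but I should also note that the middle term $\int_0^t e^{-(t-s)A}f(s)\,ds$ and the term $e^{-tA}u_0$ individually lie in $X$ as established, so all three ``terms'' in the statement (reading the right-hand side of \eqref{JJu-id} as two terms, with $u$ itself the third) belong to $X$.

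\textbf{Main obstacle.} The hard part will be the rigorous treatment of $w(t)=\int_0^t e^{-(t-s)A}f(s)\,ds$ for merely $L_2$-in-time data: the classical Duhamel verification needs H\"older continuity of $f$, so I must replace that with the weak (distributional) identity in $\mathcal D'(0,T;V^*)$ and a Fubini/integration-by-parts manipulation that is only formally justified until one checks the integrability needed for Fubini—this is where the analytic-semigroup bound $\|Ae^{-\tau A}\|\sim\tau^{-1}$ (non-integrable!) threatens trouble, and the resolution is to keep $A$ paired with a test function or distributed inside the convolution so that only the integrable $\tau^{-1/2}$-type kernel appears. Here the author's hint—that it ``suffices to reinforce the usual argument by the injectivity of $e^{-tA}$''—suggests an alternative: prove $v$ solves the equation more cheaply and use injectivity of $e^{-TA}$ (Proposition~\ref{JJinj-prop}) to upgrade a mild identity to \eqref{JJu-id}, sidestepping part of the convolution analysis; I would look for that shortcut rather than grinding through the kernel estimates.
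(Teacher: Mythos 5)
Your overall direction is reversed relative to the paper's, and the reversal is what creates the gap. You propose to verify that the right-hand side of \eqref{JJu-id} solves \eqref{JJivp-id} and then invoke uniqueness; the paper instead \emph{starts} from the solution $u\in X$ that Theorem~\ref{JJthm:Temam} already provides, multiplies $\partial_tu+Au=f$ by the integrating factor $e^{-(T-t)A}$, integrates by the Fundamental Theorem for vector functions, and then cancels the factor $e^{-(T-t)A}$ on both sides using its injectivity (Proposition~\ref{JJinj-prop}). That is the whole proof of the formula; no direct analysis of the convolution $w(t)=\int_0^te^{-(t-s)A}f(s)\,ds$ is ever needed. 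The membership of each term in $X$ then comes for free: applying Theorem~\ref{JJthm:Temam} with $f=0$ shows $e^{-tA}u_0\in X$, and the integral term is the difference $u-e^{-tA}u_0$ of two elements of $X$. You gesture at this shortcut in your last sentence but do not carry it out, and your actual plan rests on steps that do not go through.

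Concretely, two steps of your plan fail. First, the bound $\|w\|_{L_2(0,T;V)}\le c\|f\|_{L_2(0,T;V^*)}$ cannot be obtained by a Minkowski/Young argument against an integrable kernel: regularizing from $V^*$ up to $V$ through the semigroup costs a full power, $\|e^{-\tau A}\|_{\JJB(V^*,V)}\sim\tau^{-1}$, which is \emph{not} in $L_1(0,T)$; your proposed $\tau^{-1/2}$ kernel only accounts for half of the gain (e.g.\ $H\to V$ or $V^*\to H$), and composing two half-gains reproduces the non-integrable $\tau^{-1}$. This estimate is precisely the $L_2$ maximal regularity statement, which in the Hilbert space setting requires Plancherel's theorem (de Simon) or Lions' theorem itself --- i.e.\ the content of Theorem~\ref{JJthm:Temam}, so your attempt to avoid circularity lands back in it. Second, your identification $\|u\|_V\sim\|A^{1/2}u\|_H$ presupposes $D(A^{1/2})=V$, which is the Kato square root property; for general non-selfadjoint Lax--Milgram operators this is \emph{false} (McIntosh's counterexample), and the paper explicitly emphasizes that $A$ need not be selfadjoint. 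The distributional verification that $w'+Aw=f$ via Fubini is salvageable, but without the $X$-membership of $w$ you cannot close the uniqueness argument in the class $X$.
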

\begin{proof}
Once \eqref{JJu-id} has been shown, Theorem~\ref{JJthm:Temam} yields for $f=0$ that
$u(t)\in X$, hence $e^{-tA}u_0\in X$. For general $(f,u_0)$ the last term
containing $f$ is then also in $X$.

To obtain \eqref{JJu-id} in the above context, note that all terms in 
$\partial_t u+A u=f$ are in $L_2(0,T;V^*)$. Therefore  $e^{-(T-t)A}$
applies to both sides as an integration factor, so 
\begin{equation}
  \partial_t(e^{-(T-t)A}u(t))=e^{-(T-t)A}\partial_t u(t)+ e^{-(T-t)A}A u(t)=e^{-(T-t)A}f(t).
\end{equation}
Indeed, $e^{-(T-t)A}u(t)$ is in $L_1(0,T;V^*)$ and 
its derivative in ${\cal D}'(0,T;V^*)$ follows a Leibniz rule, as one can prove by regularisation
since $u(t)\in V=D(A)$ for $t$ a.e.

The right-hand side above is in $L_2(0,T;V^*)$, hence in $L_1(0,T;V^*)$, so when the Fundamental Theorem for vector functions 
(cf.\ \cite[Lem.\ III.1.1]{JJTem84}) is applied and followed by commutation of $e^{-(T-t)A}$ with the
integral (via Bochner's identity), 
\begin{equation} \label{JJeq:identityT}
  \begin{split}
    e^{-(T-t)A}u(t)&=e^{-TA}u_0+\int_0^t e^{-(T-s)A}f(s)\,ds
\\
  &=e^{-(T-t)A}e^{-tA}u_0+e^{-(T-t)A}\int_0^t e^{-(t-s)A}f(s)\,ds. 
  \end{split}
\end{equation}
Since $e^{-(T-t)A}$ is injective, cf.~Proposition~\ref{JJinj-prop}, \eqref{JJu-id} now results at once. 
\end{proof}

As all terms in \eqref{JJu-id} are in $C([0,T];H)$, we may safely evaluate at $t=T$, which in view of
\eqref{JJyf-eq} gives that
$u(T)=e^{-TA}u(0)+y_f$; this is the flow map $u(0)\mapsto u(T)$. 
Owing to the invertibilty of $e^{-TA}$ once again, this flow is inverted  by
\begin{equation}  \label{JJu0uT-id}
  u(0)=e^{TA}(u(T)-y_f).  
\end{equation}
In other words, the solutions in $X$ to $u'+Au=f$ are for fixed $f$ parametrised by the initial
states $u(0)$ in $H$. Departing from this observation, one may give an intuitive

\begin{proof}[Proof of Theorem 1]
When \eqref{JJeq:fvA-intro} has a solution $u \in X$, then $u(T)=u_T$ is reached from
the initial state $u(0)$ determined from the bijection in \eqref{JJu0uT-id}. This 
gives that $u_T-y_f = e^{-TA} u(0)\in D(e^{TA})$, so  \eqref{JJeq:cc-intro}
is necessary. 

In case $u_T$, $f$ do fulfill \eqref{JJeq:cc-intro},
then $u_0 = e^{TA}(u_T - y_f)$ is a well-defined vector in $H$, so 
Theorem~\ref{JJthm:Temam} yields a
function $u\in X$ solving $u' +Au = f$ and $u(0)=u_0$. 
By \eqref{JJu0uT-id} 
this $u$ has final state $u(T)=e^{-TA}e^{TA}(u_T-y_f)+y_f=u_T$, hence solves \eqref{JJeq:fvA-intro}.

In the affirmative case, one obtains \eqref{JJeq:fvp_solution} by insertion
of formula \eqref{JJu0uT-id} for $u(0)$ into \eqref{JJu-id}. That each term in \eqref{JJeq:fvp_solution}
is a function belonging to $X$ was seen in Theorem~\ref{JJDuhamel-thm}.

Uniqueness of $u$ in $X$ is obvious from the right-hand side of \eqref{JJeq:fvp_solution}. 
The solution can hence be estimated in $X$ by insertion of \eqref{JJu0uT-id} into the inequality
in Proposition~\ref{JJpest-prop}, which gives $\|u\|_X^2\le
c(|e^{TA}(u_T-y_f)|^2+\|f\|_{L_2(0,T;V^*)}^2)$. Here one may add $|u_T|^2$ on the right-hand side
to arrive at the expression for $\|(f,u_T)\|_Y$ given in Theorem 1.
\end{proof}

\begin{remark}
 The above arguments seem to extend to Lax--Milgram operators $A$ that are only
 $V$-coercive, i.e.\ fulfil $\Re a(u,u)\ge C_4\|u\|^2-k|u|^2$ for $u\in V$. In fact, it was
 observed already in \cite{JJLiMa72} that Theorem~\ref{JJthm:Temam} holds \emph{verbatim} for such $A$,
 for since $A+k$ is $V$-elliptic, the unique solvability in $X$ of $v'+(A+k)v=e^{-kt}f$, $v(0)=u_0$ yields a
 unique solution $u=e^{kt}v$ of $u'+Au=f$, $u(0)=u_0$. Since the same translation trick gave the
 improved version of Proposition~\ref{JJinj-prop}, also $V$-coercive $A$ generate
 analytic semigroups of injections; so the proofs of the Duhamel formula and of
 Theorem~\ref{JJheat-intro} seem applicable in their present form. However, the estimates in Proposition~\ref{JJpest-prop} need to
 be modified using Gr{\"o}nwall's lemma. The details of this are left for future work.
 (Added in proof: An elaboration of the results indicated in this remark will appear in the forthcoming paper \cite{JJ19cor}.)
\end{remark}

\begin{remark}
Recently Almog, Grebenkov, Helffer, Henry \cite{JJAlHe15,JJGrHelHen17,JJGrHe17} studied
variants of the complex Airy operator via triples $(H,V,a)$, and 
Theorem~\ref{JJheat-intro} is expected to extend to final value problems for those of their realisations that have
non-empty spectrum.
\end{remark}

\section{The Heat Problem with Final Time Data}
  \label{JJheat-sect}

To follow up on Theorem~\ref{JJintro-thm}, it is now applied to the heat equation and its final value problem.
In the sequel $\Omega$ stands for a $C^\infty$ smooth, open bounded set in $\JJRn$,
$n\ge2$ as described in \cite[App.~C]{JJG09}. In particular $\Omega$  is
locally on one side of its boundary $\Gamma=\partial \Omega$.
For such sets  we consider the problem of finding the $u$ satisfying
\begin{equation}  \label{JJeq:heat_fvp}
\left\{
\begin{aligned}
  \partial_tu(t,x) -\Delta u(t,x) &= f(t,x) &&\text{ in } Q= ]0,T[ \times \Omega ,
\\
  \gamma_0 u(t,x) &= g(t,x) && \text{ on } S= ]0,T[ \times \Gamma,
\\
   r_T u(x) &= u_T(x) && \text{ at } \left\{ T \right\} \times \Omega.
\end{aligned}
\right .
\end{equation}
Hereby the trace of functions on $\Gamma$ is written in the operator notation $\gamma_0u=
u|_{\Gamma}$. Similarly $\gamma_0$ is also used for traces on $S$, while $r_T$ denotes the trace 
at $t=T$.

Moreover, $H^1_0(\Omega)$ is the subspace obtained by closing 
$C_0^\infty(\Omega)$ in the Sobolev space $H^1(\Omega)$. Dual to this one has $H^{-1}(\Omega)$,
which identifies with the set of restrictions to $\Omega$ from $H^{-1}(\JJRn)$, endowed with the
infimum norm; Chapter 6 and Remark 9.4 in \cite{JJG09} could be references for this and basic
facts on boundary value problems.

\subsection{The Boundary Homogeneous Case} 
In case  $g \equiv 0$ in \eqref{JJeq:heat_fvp}, the main result in Theorem~\ref{JJintro-thm} applies for
\begin{align}
  V = H_0^1(\Omega),  \quad H = L_2(\Omega), \quad  V^* = H^{-1}(\Omega).
\end{align}
Indeed, the boundary condition $\gamma_0u=0$ is then imposed via the condition that $u(t)\in V$ for all $t$,
or rather through use of the Dirichlet realization of the Laplacian $-\Delta_{\gamma_0}$ 
(denoted by $\JJmlap_D$ in the introduction), which is
the Lax--Milgram operator $A$ induced by the triple
$(L_2(\Omega),H_0^1(\Omega),s)$ for 
\begin{align}
  s(u,v) = \sum_{j=1}^n \JJscal{\partial_j u}{\partial_j v}_{L_2(\Omega)}. 
\end{align}
In fact, Poincar\' e's inequality yields that $s(u,v)$ is
$H_0^1(\Omega)$-elliptic and symmetric, so $A = -\JJlap_{\gamma_0}$ is a selfadjoint
unbounded operator in $L_2(\Omega)$, with $D(-\JJlap_{\gamma_0})\subset H^1_0(\Omega)$.  

Hence $-A = \JJlap_{\gamma_0}$ generates an analytic semigroup of operators 
$e^{t\JJlap_{\gamma_0}}$ in $\JJB(L_2(\Omega))$, and the bounded bijective extension 
$\JJlap\colon H^{1}_0(\Omega) \rightarrow H^{-1}(\Omega)$ 
induces the analytic semigroup $e^{t\JJlap}$ on $ V^*=H^{-1}(\Omega)$.
As done previously, we set $(e^{t\JJlap_{\gamma_0}})^{-1} = e^{-t\JJlap_{\gamma_0}}$.

Moreover, when $g=0$ in \eqref{JJeq:heat_fvp}, then the solution and data spaces amount to
\begin{align}
  X_0 &= L_2(0,T;H^1_0(\Omega)) \bigcap C([0,T]; L_2(\Omega)) \bigcap H^1(0,T; H^{-1}(\Omega)),
\label{JJX0-id}
\\  
  Y_0&= \left\{ (f,u_T) \in L_2(0,T;H^{-1}(\Omega)) \oplus L_2(\Omega) \Bigm|  
                  u_T - y_f \in D(e^{-T\Delta_{\gamma_0}}) \right\}.
\label{Y0-id}
\end{align}
Here, with $y_f$ given in \eqref{JJyf-eq}, the data norm from Theorem~\ref{JJintro-thm} specialises to
\begin{align}
 \| (f,u_T) \|_{Y_0}^2  
  = \int_0^T\|f(t)\|^2_{H^{-1}(\Omega)}\,dt 
  + \int_\Omega(|u_T|^2+|e^{-T\Delta_{\gamma_0}}(u_T - y_f )|^2)\,dx.
\end{align} 

Now Theorem~\ref{JJintro-thm} straightforwardly gives the following result, which
first appeared in \cite{JJChJo18ax} even though the problem is entirely classical:

\begin{theorem}  \label{JJheat0-thm}
Let $A=\JJmlap_{\gamma_0}$ be the Dirichlet realization of the Laplacian in $\Omega$ and 
$\JJmlap$ its extension, as introduced above.
When $g=0$ in the final value problem \eqref{JJeq:heat_fvp} and 
$f \in L_2(0,T;H^{-1}(\Omega))$, $u_T \in L_2(\Omega)$, 
then there exists a solution $u$ in $X_0$ of \eqref{JJeq:heat_fvp} if and only if the data $(f,u_T)$
are given in $Y_0$, i.e.\ if and only if
\begin{equation}  \label{heat-ccc}
  u_T - \int_0^T e^{(T-s)\JJlap}f(s) \,ds\quad \text{ belongs to }\quad D(e^{-T \JJlap_{\gamma_0}}). 
\end{equation}
In the affirmative case, $u$ is uniquely determined in $X_0$ and 
$\|u\|_{X_0} \leq c \| (f,u_T) \|_{Y_0}$.
Furthermore the difference  in \eqref{heat-ccc} equals 
$e^{T\Delta_{\gamma_0}}u(0)$ in $L_2(\Omega)$. 
\end{theorem}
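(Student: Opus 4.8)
The plan is to deduce Theorem~\ref{JJheat0-thm} as a direct specialisation of the abstract Theorem~\ref{JJintro-thm}, so the bulk of the work is to verify that the concrete triple $(L_2(\Omega),H^1_0(\Omega),s)$ satisfies the hypotheses of the abstract framework, and then to translate notation. First I would recall that $H^1_0(\Omega)\hookrightarrow L_2(\Omega)\hookrightarrow H^{-1}(\Omega)$ is a Gelfand triple with dense continuous injections (the density of $C^\infty_0(\Omega)$ being the definition of $H^1_0$, the second injection being the dual of the first), so the triple is admissible. Then I would check the three structural inequalities: boundedness $|s(u,v)|\le C_3\|u\|\,\|v\|$ is Cauchy--Schwarz on each summand; $V$-ellipticity $\Re s(v,v)=\sum_j\|\partial_j v\|^2\ge C_4\|v\|_{H^1}^2$ follows from Poincar\'e's inequality (which bounds $\|v\|_{L_2}$ by $\|\nabla v\|_{L_2}$ on the bounded set $\Omega$); and the norm comparison $\|v\|_*\le C_1|v|\le C_2\|v\|$ is just the continuity of the two embeddings. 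Since $s$ is moreover symmetric, the induced Lax--Milgram operator $A=-\Delta_{\gamma_0}$ is selfadjoint, unbounded, closed in $L_2(\Omega)$, with $D(A)\subset H^1_0(\Omega)$; this is classical and already asserted in the text preceding the theorem.

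Next I would note that $-A=\Delta_{\gamma_0}$ therefore generates an analytic semigroup $e^{t\Delta_{\gamma_0}}$ on $L_2(\Omega)$ (the half-angle is $\operatorname{arccot}(C_3/C_4)$, but for a selfadjoint positive operator one gets the full right half-plane anyway), and the bounded bijective extension $\Delta\colon H^1_0(\Omega)\to H^{-1}(\Omega)$ induces an analytic semigroup on $V^*=H^{-1}(\Omega)$, exactly as in \cite[Lem.~4,5]{JJChJo18ax}. By Proposition~\ref{JJinj-prop} these semigroups consist of injections, so the inverse $e^{-t\Delta_{\gamma_0}}=(e^{t\Delta_{\gamma_0}})^{-1}$ is a well-defined closed operator on $L_2(\Omega)$ with domain $D(e^{-t\Delta_{\gamma_0}})=R(e^{t\Delta_{\gamma_0}})$, matching the notation fixed in the text. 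I would also observe that the boundary condition $\gamma_0 u=0$ in \eqref{JJeq:heat_fvp} is encoded precisely by the requirement $u(t)\in V=H^1_0(\Omega)$ for a.e.\ $t$, which is built into membership in $X_0$; so with $g=0$ the problem \eqref{JJeq:heat_fvp} is literally the abstract problem \eqref{JJeq:fvA-intro} for this triple, and $X_0$, $Y_0$ in \eqref{JJX0-id}--\eqref{Y0-id} are the spaces $X$, $Y$ written out.

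With all hypotheses checked, Theorem~\ref{JJintro-thm} applies verbatim and delivers: existence of a solution $u\in X_0$ if and only if $(f,u_T)\in Y_0$, i.e.\ if and only if the compatibility condition \eqref{heat-ccc} holds; uniqueness of $u$ in $X_0$; and the stability estimate $\|u\|_{X_0}\le c\|(f,u_T)\|_{Y_0}$ from \eqref{JJeq:Y-intro}. Here I would just point out the sign convention: in \eqref{JJyf-eq} the yield is $y_f=\int_0^T e^{-(T-s)A}f(s)\,ds=\int_0^T e^{(T-s)\Delta_{\gamma_0}}f(s)\,ds$ (extended to act on $H^{-1}(\Omega)$), which is the integral written in \eqref{heat-ccc}. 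Finally, the last sentence of the theorem---that the difference in \eqref{heat-ccc} equals $e^{T\Delta_{\gamma_0}}u(0)$---is immediate from the inversion formula \eqref{JJu0uT-id}, namely $u(0)=e^{TA}(u(T)-y_f)=e^{-T\Delta_{\gamma_0}}(u_T-y_f)$ in the heat-equation notation, which is exactly the claim read backwards.

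I do not expect a genuine obstacle here: the theorem is a corollary and the only real task is bookkeeping. If anything is delicate, it is making sure that $e^{t\Delta}$ on $H^{-1}(\Omega)$ is the \emph{same} semigroup as $e^{t\Delta_{\gamma_0}}$ on $L_2(\Omega)$ restricted, so that $y_f\in H^{-1}(\Omega)$ in \eqref{heat-ccc} coincides with the $y_f\in L_2(\Omega)$ guaranteed by the surjectivity $y_f\colon L_2(0,T;V^*)\to H$; but this compatibility of the two analytic extensions is precisely what \cite[Lem.~4,5, Prop.~5]{JJChJo18ax} provide, so invoking those references closes the argument.
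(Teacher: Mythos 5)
Your proposal is correct and follows exactly the route the paper intends: the paper gives no separate proof of Theorem~\ref{JJheat0-thm}, stating only that Theorem~\ref{JJintro-thm} ``straightforwardly gives'' it after the preceding paragraphs have identified the triple $(L_2(\Omega),H^1_0(\Omega),s)$, its $V$-ellipticity via Poincar\'e's inequality, and the notational dictionary $e^{-tA}=e^{t\Delta_{\gamma_0}}$. Your verification of the Gelfand-triple hypotheses, the translation of $X$, $Y$ into $X_0$, $Y_0$, and the derivation of the final sentence from \eqref{JJu0uT-id} supply precisely the bookkeeping the paper leaves implicit.
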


\subsection{The Inhomogeneous Case}

When $g\ne0$ on the surface $S$, cf.\ \eqref{JJeq:heat_fvp},
then the solution $u(t,\cdot)$ belongs to the full Sobolev space $H^1(\Omega)$ for each $t>0$, so  here the
solution space is denoted by $X_1$, 
\begin{align} \label{JJX1-id}
 X_1 = L_2(0,T; H^1(\Omega)) \bigcap C([0,T];L_2(\Omega)) \bigcap H^1(0,T;H^{-1}(\Omega)).
\end{align}
Clearly $X_1$ is a Banach space when normed analogously to \eqref{JJeq:Xnorm},
\begin{align} \label{JJXheat-nrm}
  \| u \|_{X_1}= (\|u\|_{L_2(0,T; H^1(\Omega))}^2 + \sup_{0\leq t \leq T} \|u(t)\|_{L_2(\Omega)}^2 
                   + \|u\|_{H^1(0,T; H^{-1}(\Omega))}^2)^{1/2}.
\end{align}
Here $H^1$, $H^{-1}$ are not dual on $\Omega$, so the previously mentioned redundancy 
does not extend to the term $\sup_{[0,T]} \|u\|_{L_2}$ above.

For $g\ne0$ the standard strategy is, of course,  to (try to)
reduce to a semi-homogeneous problem by choosing $w$ so that
$\gamma_0 w = g$ on $S$, using the classical

\begin{lemma}  \label{JJwKg-lem}
$\gamma_0\colon H^1(Q)\to H^{1/2}(S)$ is a continuous surjection having a
bounded right inverse $\tilde{K}_0$, that is,  $\gamma_0\tilde{K}_0 g=g$
for every $g\in H^{1/2}(S)$ and
\begin{align}  \label{JJeq:wKg}
  \|w\|_{H^1(Q)}=\|\tilde{K}_0 g\|_{H^1(Q)} \leq c \|g\|_{H^{1/2}(S)}.
\end{align}
\end{lemma}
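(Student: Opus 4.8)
The plan is to deduce Lemma~\ref{JJwKg-lem} from the classical trace theorem by exploiting that $\overline{Q}=[0,T]\times\overline{\Omega}$ is a Lipschitz domain with a particularly simple product corner structure along the two edges $\{0\}\times\Gamma$ and $\{T\}\times\Gamma$, and that $S=\,]0,T[\,\times\Gamma$ is a relatively open piece of $\partial Q$ bounded by precisely those edges. First I would record the trace theorem for Lipschitz domains: the full trace $\gamma_0^{\mathrm{full}}\colon H^1(Q)\to H^{1/2}(\partial Q)$ is bounded and surjective and has a bounded linear right inverse $K\colon H^{1/2}(\partial Q)\to H^1(Q)$ with $\gamma_0^{\mathrm{full}}K=\mathrm{id}$ (Ne\v{c}as, Grisvard; or the anisotropic $H^{r,s}$ trace results of Lions--Magenes if one prefers to read $H^{1/2}(S)$ in its parabolic sense). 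Since the restriction map $\rho\colon H^{1/2}(\partial Q)\to H^{1/2}(S)$ is manifestly bounded and the $\gamma_0$ of the lemma is $\rho\circ\gamma_0^{\mathrm{full}}$, everything reduces to producing a \emph{bounded linear extension operator} $E\colon H^{1/2}(S)\to H^{1/2}(\partial Q)$ with $\rho E=\mathrm{id}$: then $\tilde{K}_0:=K\circ E$ satisfies $\gamma_0\tilde{K}_0=\rho\,\gamma_0^{\mathrm{full}}K\,E=\rho E=\mathrm{id}$ and $\|\tilde{K}_0 g\|_{H^1(Q)}\le\|K\|\,\|E\|\,\|g\|_{H^{1/2}(S)}$, which is the estimate \eqref{JJeq:wKg}, and boundedness of $\gamma_0$ itself is part of the trace theorem.

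To construct $E$ I would localize. Cover the compact set $[0,T]\times\Gamma$ by finitely many boundary charts and pick a subordinate partition of unity that is $\equiv1$ on a neighbourhood of $S$ in $\overline{Q}$. On patches with $t$ bounded away from $0$ and $T$ there is nothing to extend. Near an edge, say $t=0$, local coordinates flatten $\partial Q$ to $\JJRn[{}]$'s model $\JJR^{\,n-2}\times\{(s_1,s_2):s_1s_2=0,\ s_i\ge0\}$, i.e.\ two half-hyperplanes meeting along the edge $\JJR^{\,n-2}\times\{0\}$, with $S$ corresponding to one of them, say $\{s_1=0,\,s_2\ge0\}\cong\JJR^{\,n-2}\times\JJR_+$. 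Extending the local datum from $H^{1/2}(\JJR^{\,n-2}\times\JJR_+)$ to $H^{1/2}(\JJR^{\,n-1})$ across the flat codimension-one boundary $\JJR^{\,n-2}\times\{0\}$, and transporting back, extends $g$ across the edge to all of $\partial Q$; patching these local extensions with the partition of unity yields $E$. The half-space right inverse needed inside $K$ is the standard one (e.g.\ a Fourier multiplier of the type $e^{-s_1\langle\xi'\rangle}$), and the partition-of-unity bookkeeping preserves $\gamma_0\tilde{K}_0=\mathrm{id}$ precisely because the cutoffs sum to $1$ near $S$.

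The main obstacle is the construction of $E$, and within it the extension of an $H^{1/2}$ function across the edge $\{0,T\}\times\Gamma$: the index $s=1/2$ is the borderline case where extension by zero fails, and $\partial Q$ is only Lipschitz near the edge, so one cannot simply reflect but must appeal to a genuine Sobolev extension theorem valid at the critical index (for instance the universal Stein/Rychkov extension operator, which is bounded $H^s\to H^s(\JJRn)$ for every $s\ge0$). Everything else is routine. As an alternative route that makes the product structure explicit, one may instead use a Poisson-type spatial coretrace $K_\Omega\colon H^{1/2}(\Gamma)\to H^1(\Omega)$ that is moreover bounded $H^{-1/2}(\Gamma)\to L_2(\Omega)$, set $(\tilde{K}_0 g)(t):=K_\Omega(g(t,\cdot))$, and use $H^1(Q)=L_2(0,T;H^1(\Omega))\cap H^1(0,T;L_2(\Omega))$ to see that $\tilde{K}_0$ is bounded from $L_2(0,T;H^{1/2}(\Gamma))\cap H^1(0,T;H^{-1/2}(\Gamma))$ into $H^1(Q)$; in this formulation the remaining point is the identification of that intersection with $H^{1/2}(S)$, which is exactly the anisotropic trace-space computation of Lions--Magenes and again constitutes the only non-formal step.
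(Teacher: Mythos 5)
The paper itself offers no proof of this lemma --- it explicitly defers to \cite{JJChJo18ax} for a sketch ``using standard techniques'' --- so there is no in-paper argument to compare against step by step. Your main route is sound and is exactly the sort of standard argument being alluded to: apply the trace theorem on the bounded Lipschitz domain $Q$ to get a bounded surjection $\gamma_0^{\mathrm{full}}\colon H^1(Q)\to H^{1/2}(\partial Q)$ with bounded right inverse $K$, note that the lemma's $\gamma_0$ is $\rho\circ\gamma_0^{\mathrm{full}}$ for the bounded restriction $\rho$ to $S$, and reduce everything to a bounded extension operator $E\colon H^{1/2}(S)\to H^{1/2}(\partial Q)$ across the two edges $\{0\}\times\Gamma$ and $\{T\}\times\Gamma$; then $\tilde K_0=KE$ gives $\gamma_0\tilde K_0=\rho E=\mathrm{id}$ and the estimate \eqref{JJeq:wKg}. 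You also correctly isolate the one genuinely delicate point, namely that $s=1/2$ is the borderline exponent at which extension by zero fails, so a true extension theorem (Stein/Rychkov, valid on Lipschitz pieces) is needed at the edges.

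Your ``alternative route'' does contain a genuine error, however: the intersection $L_2(0,T;H^{1/2}(\Gamma))\cap H^1(0,T;H^{-1/2}(\Gamma))$ is \emph{not} $H^{1/2}(S)$, and the Lions--Magenes trace computation does not deliver that identification. The isotropic $H^{1/2}$ on the product $\,]0,T[\,\times\Gamma$ identifies with $L_2(0,T;H^{1/2}(\Gamma))\cap H^{1/2}(0,T;L_2(\Gamma))$; in a local Fourier model on $\JJR_t\times\JJR^{n-1}_y$, membership of $H^1(0,T;H^{-1/2}(\Gamma))$ requires finiteness of $\int(1+\tau^2)(1+|\eta|^2)^{-1/2}|\hat g|^2$, and for $\eta=0$ the weight $1+\tau^2$ is not dominated by the isotropic weight $(1+\tau^2+|\eta|^2)^{1/2}$, so $H^{1/2}(S)$ is not even contained in that intersection. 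Concretely, a purely fibrewise coretrace $(\tilde K_0g)(t)=K_\Omega g(t,\cdot)$ cannot manufacture the $H^1(0,T;L_2(\Omega))$ component of $H^1(Q)$ from data with only $H^{1/2}$ regularity in time; a coretrace that also smooths in $t$ (as the genuine anisotropic coretraces do) is indispensable. So keep your first argument and discard the second.
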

In lack of a reference with details, the reader is referred to \cite{JJChJo18ax} for a sketch of how
this lemma follows using standard techniques.

Another preparation is based on the fine theory of the elliptic problem
\begin{equation} \label{JJellDir-id}
  \JJmlap u= f,\quad \gamma_0 u=g.
\end{equation}
Indeed, it exploited below that
${\cal Q}=I-\JJlap_{\gamma_0}^{-1}\JJlap$ is a well-known projection in $H^1(\Omega)$, along
$H^1_0(\Omega)$ and onto the closed subspace of harmonic $H^1$-functions, 
\begin{equation}  \label{JJZ1-id}
  Z(-\Delta)=\{\, u\in H^1(\Omega)\mid -\Delta u=0\,\}. 
\end{equation}
To recall this, \eqref{JJellDir-id} may conveniently be treated via the matrix operator  
$\left(\begin{smallmatrix} \JJmlap\\ \gamma_0\end{smallmatrix}\right)$,
with an inverse in row form 
$\left(\begin{smallmatrix} \JJmlap_{\gamma_0}^{-1}& K_0\end{smallmatrix}\right)$ that applies to the data
$\left(\begin{smallmatrix} f\\ g\end{smallmatrix}\right)$,
for then the basic composition formulae appear in two identities on $H^1(\Omega)$ and
$H^{-1}(\Omega)\oplus H^{1/2}(\Gamma)$, 
\begin{align}
  I &= \begin{pmatrix}  \JJmlap_{\gamma_0}^{-1}& K_0\end{pmatrix}
   \begin{pmatrix} -\JJlap \\ \gamma_0\end{pmatrix}
    =\JJlap_{\gamma_0}^{-1}\JJlap+K_0\gamma_0,
\label{JJDir1-id}
\\
  \begin{pmatrix} I&0\\ 0& I\end{pmatrix}
  &=\begin{pmatrix} \JJmlap \\ \gamma_0\end{pmatrix}\begin{pmatrix}  \JJmlap_{\gamma_0}^{-1}& K_0\end{pmatrix}
   =\begin{pmatrix}  \JJlap\JJlap_{\gamma_0}^{-1}& \JJlap K_0\\ -\gamma_0\JJlap_{\gamma_0}^{-1}& \gamma_0K_0\end{pmatrix}.
\label{JJDir2-id}
\end{align}
In particular the first formula yields that ${\cal Q}=I\JJmlap_{\gamma_0}^{-1}\JJlap=K_0\gamma_0$ on $H^1(\Omega)$. 
However, it should be emphasized that the simplicity of the formulas \eqref{JJDir1-id} and~\eqref{JJDir2-id}
relies on a specific choice of $K_0$, which is recalled here:
 
As $\JJlap_{\gamma_0}=\Delta\big|_{H^1_0}$ holds in the distribution sense, ${\cal P}= \JJlap_{\gamma_0}^{-1} \JJlap $ clearly fulfils
${\cal P}^2=\cal P$, it is bounded $H^1\to H_0^1$ and equals $I$ on $H^1_0$, so $\cal P$ is the projection onto
$H_0^1(\Omega)$ along its null space, which clearly is the space in \eqref{JJZ1-id}.
Therefore $H^1$ is a direct sum,
\begin{align} 
  H^1(\Omega) = H_0^1(\Omega) \dotplus Z(\JJmlap),
\end{align}
so that ${\cal Q} = I-{\cal P}=I\JJmlap_{\gamma_0}^{-1} \JJlap$ is the projection onto $Z(\JJmlap)$ along $H_0^1(\Omega)$.

Since $\gamma_0\colon H^1(\Omega) \rightarrow H^{1/2}(\Gamma)$ is surjective with null-space $H^1_0$,
it has an inverse $K_0$ on the complement $Z(\JJmlap)$, which by the open mapping principle is bounded  
$K_0 \colon H^{1/2}(\Gamma) \rightarrow Z(\JJmlap) \hookrightarrow H^1(\Omega)$. Since in  this
construction $K_0=(\gamma_0|_{Z(\JJmlap)})^{-1}$, it is  also a right-inverse, i.e.\ $\gamma_0 K_0= I_{H^{1/2}(\Gamma)}$. The rest of
\eqref{JJDir2-id} now follows at once. 

Moreover, since $\gamma_0{\cal P}=\gamma_0\JJlap_{\gamma_0}^{-1} \JJlap=0$,
the definition of $\cal Q$  gives \eqref{JJDir1-id} thus:
\begin{align}   \label{JJeq:KgQ}
  K_0 \gamma_0 = K_0 \gamma_0 ({\cal P}+{\cal Q}) = K_0 \gamma_0 {\cal Q} = I_{Z(\JJmlap)} {\cal Q}
  = {\cal Q}=I\JJmlap_{\gamma_0}^{-1} \JJlap.
\end{align}
\begin{remark}
$K_0$ is an example of a Poisson operator;
these are amply discussed within the pseudo-differential boundary operator calculus, for example in \cite{JJG1}.
\end{remark}

\begin{remark} 
  \label{JJHsS-rem}
The $H^s(S)$-norm can be chosen so that this is a Hilbert space; cf.\ the use of local
coordinates in \cite[(8.10)]{JJG09}. 
The vast subject of Sobolev spaces on $C^\infty$ surfaces generally requires distribution
densities as explained in \cite[Sect.\ 6.3]{JJH} (cf.\ \cite[Sect.\ 8.2]{JJG09} for a
concise review). But the surface measure on $S$ induces a well-known identification of densities with
distributions on $S$, and  within this framework, a systematic exposition can be found
in \cite[Sect.\ 4]{JJJoMHSi3}, albeit in an $L_p$-setting with
anisotropic mixed-norm Triebel--Lizorkin spaces $F^{s,\vec a}_{\vec p,q}(S)$, which 
are the correct boundary data spaces for parabolic problems having different
integrability properties  in $x$ and $t$. Cf.\ also the application to the heat problem \eqref{JJeq:heat_fvp}  
in \cite[Sect.\ 6.5]{JJJoMHSi3}, and the more detailed discussion in \cite[Ch.\ 7]{JJSMHphd}.
\end{remark}

Now, when splitting the solution of \eqref{JJeq:heat_fvp} as $u=v+w$ for $w=\tilde K_0g$, cf.\ Lemma~\ref{JJwKg-lem},
then $v$ should satisfy $v'\JJmlap v=\tilde f$, $\gamma_0 v=0$ and $r_T v=\tilde u_T$ for data
\begin{equation}
  \tilde f=f-(\partial_t w-\Delta w),\qquad \tilde u_T=u_T-r_Tw.
\end{equation}
For this problem to be solved by $v$, \eqref{heat-ccc} stipulates that
$D(e^{-T\Delta_{\gamma_0}})$ should contain
\begin{equation}
  \tilde u_T-y_{\widetilde f}=u_T-y_f -(r_Tw-y_{\partial_t w \JJmlap w}).
\end{equation}
But the presence of the term $r_Tw-y_{\partial_t w\JJmlap w}$ makes it impossible just to
transfer condition \eqref{heat-ccc} of being a member of  
$D(e^{-T\Delta_{\gamma_0}})$ from $\tilde u_T-y_{\widetilde f}$ to $u_T-y_f$. Thus the
compatibility condition \eqref{heat-ccc} destroys the trick of reducing to homogeneous boundary
conditions, despite the linearity of the problem. 

To find the correct compatibility conditions on $(f,g,u_T)$, the strategy in \cite{JJChJo18ax} was
(with hindsight) 
to use Lemma~\ref{JJwKg-lem} to get a solution  formula for the corresponding linear
\emph{initial} value problem instead. 
This is motivated by the fact that, for the present space
$X_1$  of low regularity, no compatibility condition is needed for this: 
\begin{equation}  \label{JJeq:heat_ivp}
  \partial_tu -\Delta u = f \quad\text{ in } Q,
\qquad
  \gamma_0 u = g  \quad \text{ on } S,
\qquad
  r_0 u = u_0 \quad \text{ at } \{ 0 \} \times \Omega.
\end{equation}
(For general background material on \eqref{JJeq:heat_ivp} the reader could consult Section~III.6 in
\cite{JJABHN11}; and \cite{JJGrSo90} for the fine theory including compatibility conditions.)

Similarly to Theorem~\ref{JJthm:Temam} and Proposition~\ref{JJpest-prop}, one may depart from well-posedness of \eqref{JJeq:heat_ivp}.
While this is well known \emph{per se}, an explanation is given to
account below for the crucial existence of an improper integral showing up when $g\ne0$ in \eqref{JJeq:heat_fvp}.

\begin{proposition}  \label{JJheat-prop}
The heat initial value problem \eqref{JJeq:heat_ivp} has a unique solution $u\in X_1$ for given 
$f\in L_2(0,T;H^{-1}(\Omega))$, $g\in H^{1/2}(S)$, $u_0\in L_2(\Omega)$,
and there is an estimate
\begin{align}  \label{JJheat-ineq}
  \|u\|_{X_1}^2 \leq 
  c ( \|u_0\|_{L_2(\Omega)}^2 + \|f\|_{L_2(0,T;H^{-1}(\Omega))}^2 + \|g\|_{H^{1/2}(S)}^2 ).
\end{align}
\end{proposition}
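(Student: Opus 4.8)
The plan is to reduce \eqref{JJeq:heat_ivp} to the boundary‑homogeneous Cauchy problem of Theorem~\ref{JJthm:Temam} for the triple $(L_2(\Omega),H_0^1(\Omega),H^{-1}(\Omega))$ and $A=-\JJlap_{\gamma_0}$, by means of the bounded right inverse $\tilde K_0$ of Lemma~\ref{JJwKg-lem}. First I would set $w=\tilde K_0 g\in H^1(Q)$, so that $\gamma_0 w=g$ on $S$ and $\|w\|_{H^1(Q)}\le c\|g\|_{H^{1/2}(S)}$, and record that $H^1(Q)$ embeds continuously into $X_1$: the inclusions $H^1(Q)\subset L_2(0,T;H^1(\Omega))$ and $H^1(Q)\subset H^1(0,T;L_2(\Omega))\subset H^1(0,T;H^{-1}(\Omega))$ are immediate from Fubini's theorem, while $H^1(Q)\hookrightarrow C([0,T];L_2(\Omega))$ is the standard trace/interpolation fact for the cylinder $Q$. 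In particular the initial trace $r_0 w$ is a well‑defined vector in $L_2(\Omega)$ with $\|r_0 w\|_{L_2(\Omega)}\le c\|g\|_{H^{1/2}(S)}$, and $w\in X_1$.

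Next I would verify that the ``remainder data'' $\tilde f=f-(\partial_t w-\JJlap w)$ and $\tilde u_0=u_0-r_0 w$ belong to $L_2(0,T;H^{-1}(\Omega))$ and $L_2(\Omega)$, respectively: $\partial_t w\in L_2(Q)\subset L_2(0,T;H^{-1}(\Omega))$, while $\JJlap w(t)$, being the distributional Laplacian of the $H^1(\Omega)$‑function $w(t)$, is a sum of $x$‑derivatives of $L_2(\Omega)$‑functions and hence lies in $H^{-1}(\Omega)$ with $\|\JJlap w(t)\|_{H^{-1}(\Omega)}\le c\|w(t)\|_{H^1(\Omega)}$; thus $\|\tilde f\|_{L_2(0,T;H^{-1}(\Omega))}\le\|f\|_{L_2(0,T;H^{-1}(\Omega))}+c\|w\|_{H^1(Q)}$ and $\|\tilde u_0\|_{L_2(\Omega)}\le\|u_0\|_{L_2(\Omega)}+c\|g\|_{H^{1/2}(S)}$. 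Then Theorem~\ref{JJthm:Temam} supplies a unique $v\in X_0$ with $\partial_t v-\JJlap v=\tilde f$ in ${\cal D}'(0,T;H^{-1}(\Omega))$, $\gamma_0 v=0$ and $r_0 v=\tilde u_0$, and Theorem~\ref{JJDuhamel-thm} gives the explicit Duhamel representation $v(t)=e^{t\JJlap}\tilde u_0+\int_0^t e^{(t-s)\JJlap}\tilde f(s)\,ds$ for $0\le t\le T$. It is precisely this formula, through the term $\int_0^t e^{(t-s)\JJlap}(\JJlap w-\partial_t w)(s)\,ds$ carried by $w$, that will be the origin of the improper Bochner integral showing up when the final value problem \eqref{JJeq:heat_fvp} with $g\ne0$ is analysed later. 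Putting $u=v+w$, one has $u\in X_1$ (as $X_0\subset X_1$ and $w\in X_1$), and reading off termwise $\partial_t u-\JJlap u=\tilde f+(\partial_t w-\JJlap w)=f$ in ${\cal D}'(Q)$, $\gamma_0 u=0+g=g$ on $S$, and $r_0 u=\tilde u_0+r_0 w=u_0$; so $u$ solves \eqref{JJeq:heat_ivp}.

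For uniqueness I would note that the difference $z\in X_1$ of two such solutions satisfies $\partial_t z-\JJlap z=0$, $\gamma_0 z=0$, $r_0 z=0$; since $\gamma_0 z(t)=0$ forces $z(t)\in H_0^1(\Omega)$ for a.e.\ $t$, in fact $z\in X_0$, so the uniqueness part of Theorem~\ref{JJthm:Temam} yields $z=0$. The estimate \eqref{JJheat-ineq} then follows from $\|u\|_{X_1}\le c(\|v\|_{X_0}+\|w\|_{H^1(Q)})$ combined with the stability bound $\|v\|_{X_0}^2\le c(\|\tilde u_0\|_{L_2(\Omega)}^2+\|\tilde f\|_{L_2(0,T;H^{-1}(\Omega))}^2)$ of Proposition~\ref{JJpest-prop} and the bounds on $\tilde f$, $\tilde u_0$, $w$ obtained above. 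The argument is in essence bookkeeping, and I expect the only genuinely delicate points to be (a) that moving $\partial_t w-\JJlap w$ to the right‑hand side costs no regularity, i.e.\ $\tilde f\in L_2(0,T;H^{-1}(\Omega))$, and (b) that $r_0 w\in L_2(\Omega)$ is meaningful — both resting on the embedding $H^1(Q)\hookrightarrow X_1$ and its trace properties, which are exactly the facts one must have in hand to justify, further on, the improper integral flagged just after \eqref{JJeq:heat_ivp}.
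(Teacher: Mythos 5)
Your proposal is correct and follows essentially the same route as the paper: lift $g$ to $w=\tilde K_0g\in H^1(Q)$ via Lemma~\ref{JJwKg-lem}, check $w\in X_1$ and that $\tilde f=f-(\partial_t-\JJlap)w$, $\tilde u_0=u_0-w(0)$ lie in the right spaces, solve the boundary-homogeneous problem for $v=u-w$ by Theorem~\ref{JJthm:Temam}, and assemble the estimate from Proposition~\ref{JJpest-prop} together with \eqref{JJeq:wKg}. Your uniqueness argument (the difference has vanishing trace, hence lies in $X_0$, so Theorem~\ref{JJthm:Temam} applies) just spells out what the paper leaves as ``easily carried over''.
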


\begin{proof}
Let $I=\,]0,T[\,$. Setting $w=\tilde{K}_0 g$ by means of Lemma~\ref{JJwKg-lem}, we tentatively
write $u=v+w$ for some $v\in X_0$ solving \eqref{JJeq:heat_ivp} for data 
\begin{align}
  \tilde{f} = f - (\partial_t - \Delta)w,\qquad \tilde g=0,\qquad  \tilde{u}_0= u_0 - w(0).
\end{align} 
Here $w(0)$ is well defined, as $w \in H^1(Q)\subset H^1([0,T] ; L_2(\Omega)) \subset C([0,T] ;
L_2(\Omega))$ by a Sobolev embedding, which (e.g.\ as in \cite[Rem.\ 4]{JJChJo18ax}) also
gives the estimate
\begin{align} 
  \sup_{0\leq t \leq T} \|w(t)\|_{L_2(\Omega)} 
  \leq c (\|w\|_{L_2(I;L_2(\Omega))} + \|\partial_t w\|_{L_2(I;L_2(\Omega))}) \le c \|w\|_{H^1(Q)}^2.
\end{align}
To show that $w\in X_1$ and $\|w\|_{X_1}^2 \leq c \|w\|_{H^1(Q)}^2$, it is noted that
estimates of the two remaining terms in $\|w\|_{X_1}^2$, cf.\ \eqref{JJXheat-nrm}, can be read off from the obvious inequalities 
\begin{equation}  \label{JJw-ineq} 
  \| w'\|_{L_2(I;H^{-1})}^2 + \|\Delta w\|_{L_2(I;H^{-1})}^2 
  \leq c'\|w\|_{H^1(I;L_2)}^2 + c'' \|w\|_{L_2(I;H^1)}^2 
  \leq c \|w\|_{H^1(Q)}^2.
\end{equation}
This also entails $\tilde{f} \in L_2(0,T;H^{-1}(\Omega))$, and $\tilde{u}_0 \in L_2(\Omega)$,
so by Theorem~\ref{JJthm:Temam}, the boundary homogeneous problem 
for $v$ has a solution in $X_0$; cf.\ \eqref{JJX0-id}. 
Hence \eqref{JJeq:heat_ivp} has the solution $u=v+w$ in $X_1$;  its uniqueness is easily carried over from
Theorem~\ref{JJthm:Temam}. 

Finally the estimate in Theorem~\ref{JJheat0-thm} (that is a consequence of Proposition~\ref{JJpest-prop}) gives
\begin{align}
  \|u\|_{X_1}^2 & \leq 2 ( \|v\|_{X_0}^2 + \|w\|_{X_1}^2)  
  \leq c (\|\tilde{u}_0\|_{L_2(\Omega)}^2 + \|\tilde{f}\|_{L_2(I;H^{-1}(\Omega))}^2 +
  \|w\|_{X_1}^2) 
\nonumber\\
  & \leq c (\|u_0\|_{L_2(\Omega)}^2 + \|f\|_{L_2(I;H^{-1}(\Omega))}^2 
  + \|w' \JJmlap w \|_{L_2(I;H^{-1})}^2 + \|w\|_{H^1(Q)}^2),
\end{align}
which via  \eqref{JJw-ineq} and \eqref{JJeq:wKg} entails the stated estimate \eqref{JJheat-ineq}.
\end{proof}

Obviously the formula in Theorem~\ref{JJDuhamel-thm} now applies directly to the function $v=u-w$
in the above proof, which as a crucial addendum to Proposition~\ref{JJheat-prop} yields 
\begin{equation} \label{JJeq:uw}
  u(t)-w(t)= e^{t\Delta_{\gamma_0}}(u_0-w(0))+\int_0^t e^{(t-s)\JJlap}(f-(\partial_s-\Delta)w)\,ds.
\end{equation}
The  next step is to rewrite the contributions from $w$ so that $g=\gamma_0 w$
can be reintroduced.
First a regularisation of $w$ in $H^1(0,t;L_2(\Omega))$ leads to the Leibniz rule
\begin{equation}  \label{JJw-id}
   \partial_s(e^{(t-s)\Delta_{\gamma_0}}w(s))
   =e^{(t-s)\Delta_{\gamma_0}}\partial_sw(s)-\Delta_{\gamma_0}e^{(t-s)\Delta_{\gamma_0}}w(s).
\end{equation}
Here the last term is only integrable on $[0,t-\varepsilon]$ 
for $\varepsilon>0$, as it has a singularity at $s=t$; cf.\
Proposition~\ref{JJPazy'-prop}.  As a remedy, one can use the improper Bochner integral
\begin{align}  \label{JJdint-id}
  \JJdashint_0^t \Delta_{\gamma_0} e^{(t-s)\Delta_{\gamma_0}} w(s) \,ds 
 =\lim_{\varepsilon\to0}
  \int_0^{t-\varepsilon} \Delta_{\gamma_0} e^{(t-s)\Delta_{\gamma_0}} w(s) \,ds.
\end{align}

\begin{lemma}
For every $w \in H^1(Q)$ the limit \eqref{JJdint-id} exists in $L_2(\Omega)$ and
\begin{equation}
  \label{JJeq:w}
  w(t)=e^{t\Delta_{\gamma_0}}w(0)
   +\int_0^t e^{(t-s)\Delta_{\gamma_0}}\partial_sw(s)\,ds
    -\JJdashint_0^t\Delta_{\gamma_0}e^{(t-s)\Delta_{\gamma_0}}w(s))\,ds.
\end{equation}
\end{lemma}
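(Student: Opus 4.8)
The plan is to integrate the Leibniz rule \eqref{JJw-id} over the truncated interval $[0,t-\varepsilon]$, let $\varepsilon\to0^+$, and deduce the existence of the improper integral \eqref{JJdint-id} \emph{a posteriori}, as the difference of two quantities that are seen separately to converge in $L_2(\Omega)$. First I would record that on $[0,t-\varepsilon]$ the map $s\mapsto e^{(t-s)\Delta_{\gamma_0}}w(s)$ is continuous into $L_2(\Omega)$ --- since $w\in H^1(Q)\subset C([0,T];L_2(\Omega))$ by the Sobolev embedding already used for Proposition~\ref{JJheat-prop}, and $e^{\tau\Delta_{\gamma_0}}$ is analytic (so $\Delta_{\gamma_0}e^{\tau\Delta_{\gamma_0}}$ is bounded and norm-continuous) for $\tau\ge\varepsilon>0$ --- while its ${\cal D}'(0,t-\varepsilon;L_2(\Omega))$-derivative, given by \eqref{JJw-id}, lies in $L_1(0,t-\varepsilon;L_2(\Omega))$: the first term is dominated by $\|\partial_sw(s)\|_{L_2(\Omega)}\in L_1$ because $e^{\tau\Delta_{\gamma_0}}$ is a contraction, and the second by $\tfrac{c}{t-s}\|w(s)\|_{L_2(\Omega)}$ via the bound $\|\Delta_{\gamma_0}e^{\tau\Delta_{\gamma_0}}\|_{\JJB(L_2(\Omega))}\le c/\tau$ from Proposition~\ref{JJPazy'-prop}, which is finite on $[0,t-\varepsilon]$. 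The Fundamental Theorem for vector functions (\cite[Lem.\ III.1.1]{JJTem84}) then yields
\[
 e^{\varepsilon\Delta_{\gamma_0}}w(t-\varepsilon)-e^{t\Delta_{\gamma_0}}w(0)
 =\int_0^{t-\varepsilon}e^{(t-s)\Delta_{\gamma_0}}\partial_sw(s)\,ds
  -\int_0^{t-\varepsilon}\Delta_{\gamma_0}e^{(t-s)\Delta_{\gamma_0}}w(s)\,ds .
\]

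Next I would pass to the limit $\varepsilon\to0^+$ in every term except the last integral. On the left, $e^{\varepsilon\Delta_{\gamma_0}}w(t-\varepsilon)\to w(t)$ in $L_2(\Omega)$: splitting $e^{\varepsilon\Delta_{\gamma_0}}w(t-\varepsilon)-w(t)=e^{\varepsilon\Delta_{\gamma_0}}(w(t-\varepsilon)-w(t))+(e^{\varepsilon\Delta_{\gamma_0}}w(t)-w(t))$, the first summand is controlled by the contraction bound together with continuity of $w$ into $L_2(\Omega)$, and the second tends to $0$ by strong continuity of the semigroup. The first integral on the right converges to $\int_0^t e^{(t-s)\Delta_{\gamma_0}}\partial_sw(s)\,ds$ by dominated convergence, the integrand being dominated by $\|\partial_sw(s)\|_{L_2(\Omega)}\in L_1(0,t)$. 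Consequently the last integral $\int_0^{t-\varepsilon}\Delta_{\gamma_0}e^{(t-s)\Delta_{\gamma_0}}w(s)\,ds$, being the difference of these two convergent quantities, must itself converge in $L_2(\Omega)$ as $\varepsilon\to0^+$; by definition this limit is the improper Bochner integral \eqref{JJdint-id}, and rearranging the limiting identity gives exactly \eqref{JJeq:w}.

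The point that takes a moment to see --- rather than a genuine obstacle --- is that the existence of \eqref{JJdint-id} cannot be obtained by absolute convergence: the bound $\|\Delta_{\gamma_0}e^{(t-s)\Delta_{\gamma_0}}w(s)\|_{L_2(\Omega)}\le\tfrac{c}{t-s}\|w(s)\|_{L_2(\Omega)}$ is essentially sharp, and its right-hand side fails to be integrable up to $s=t$ whenever $w(t)\ne0$, so the indirect derivation through the two convergent terms is what makes the argument work. The only regularity input needed beyond that is $w\in C([0,T];L_2(\Omega))$ for the endpoint term at $s=t-\varepsilon$, which is the embedding $H^1(Q)\hookrightarrow C([0,T];L_2(\Omega))$ used earlier, while the passage from the regularised Leibniz rule to \eqref{JJw-id} is standard; with these in place the lemma reduces to the short limiting computation above.
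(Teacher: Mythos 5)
Your proposal is correct and follows essentially the same route as the paper's own proof: integrate the Leibniz rule \eqref{JJw-id} over $[0,t-\varepsilon]$ via the Fundamental Theorem for vector functions, send $\varepsilon\to0^+$ in the boundary term (using $w\in C([0,T];L_2(\Omega))$ and strong continuity of the semigroup) and in the $\partial_sw$-integral (by dominated convergence), and obtain the improper integral \eqref{JJdint-id} a posteriori as the difference of the two convergent quantities. Your added remarks on the $c/\tau$ bound, the $L_1$-integrability of the derivative on the truncated interval, and the failure of absolute convergence are correct elaborations of details the paper leaves implicit.
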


\newpage
\begin{proof}
When applied to \eqref{JJw-id}, the Fundamental Theorem for vector functions gives
\begin{align}
  [e^{(t-s) \Delta_{\gamma_0}} w(s)]_{s=0}^{s=t-\varepsilon} 
  = \int_0^{t-\varepsilon} (-\Delta_{\gamma_0}) e^{(t-s)\Delta_{\gamma_0}} w \,ds 
  + \int_0^{t-\varepsilon} e^{(t-s)\Delta_{\gamma_0}} \partial_s w \,ds.
\end{align}
The left-hand side converges,
for $e^{t \Delta_{\gamma_0}}$ is of type $(M,0)$ and the proof above gave $w\in C([0,T],L_2(\Omega))$,
so by bilinearity $ e^{(t-s)) \JJlap_{\gamma_0}} w(s) \rightarrow w(t)$ in $L_2(\Omega)$ for $s\to t^-$.

Moreover, by dominated convergence the last integral converges in $L_2(\Omega)$ for $\varepsilon\to 0^+$,
whence 
$\int_0^{t-\varepsilon} \Delta_{\gamma_0} e^{(t-s)\Delta_{\gamma_0}} w(s) \,ds$ does so. Then \eqref{JJeq:w} results.
\end{proof}

By substituting \eqref{JJeq:w} for $w(t)$ in \eqref{JJeq:uw},
one obtains, as terms with $\partial_s w$ cancel, 
\begin{equation}  \label{JJeq:D}
  u(t)= e^{t\JJlap_{\gamma_0}}u_0 +\int_0^t e^{(t-s)\JJlap}f\,ds
  +\int_0^t e^{(t-s)\JJlap}\JJlap w\,ds
  -\JJdashint_0^t\JJlap_{\gamma_0}e^{(t-s)\JJlap_{\gamma_0}}w\,ds. 
\end{equation}
While the last two integrals look highly similar, a further reduction 
is possible since $\JJlap=\JJlap_{\gamma_0}\JJlap_{\gamma_0}^{-1}\JJlap$ holds on $H^1(\Omega)$.
Hence they combine into a single improper integral,
\begin{equation}
   \label{JJeq:wdiff}
   -\JJdashint_0^t \JJlap_{\gamma_0} e^{(t-s)\JJlap_{\gamma_0}}(I-\JJlap_{\gamma_0}^{-1}\JJlap) w(s)\,ds.
\end{equation}
Because of \eqref{JJeq:KgQ} we have $(I-\JJlap_{\gamma_0}^{-1}\JJlap)w={\cal Q} w=K_0\gamma_0 w= K_0 g$ as $\gamma_0 w=g$,
and when this is applied via \eqref{JJeq:wdiff} in \eqref{JJeq:D}, 
we finally arrive at the desired solution formula:

\begin{proposition} \label{JJug-prop}
If $u\in X_1$ denotes the unique solution to the initial boundary value problem \eqref{JJeq:heat_ivp}
provided by Proposition~\ref{JJheat-prop}, then $u$ fulfils the identity 
\begin{equation} \label{JJeq:ug}
  u(t)=e^{t\Delta_{\gamma_0}}u_0+\int_0^t e^{(t-s)\JJlap}f(s)\,ds
  - \JJdashint_0^t \JJlap_{\gamma_0} e^{(t-s)\JJlap_{\gamma_0}}K_0 g(s)\,ds,
\end{equation}
where the improper Bochner integral converges in $L_2(\Omega)$ for every $t\in [0,T]$.  
\end{proposition}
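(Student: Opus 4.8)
The plan is to obtain \eqref{JJeq:ug} essentially by bookkeeping, since all the hard analysis has already been carried out: in Proposition~\ref{JJheat-prop} (well-posedness of \eqref{JJeq:heat_ivp}), in Theorem~\ref{JJDuhamel-thm} (the Duhamel formula in the low-regularity space), and in the preceding lemma (the Leibniz rule and the improper Bochner integral). First I would recall from the proof of Proposition~\ref{JJheat-prop} that the unique solution $u\in X_1$ was built as $u=v+w$ with $w=\tilde K_0 g\in H^1(Q)$ and $v\in X_0$ the unique solution of the boundary-homogeneous Cauchy problem with data $\tilde f=f-(\partial_t-\Delta)w$ and $\tilde u_0=u_0-w(0)$. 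Applying Theorem~\ref{JJDuhamel-thm} to $v$ and adding $w$ back gives precisely the identity \eqref{JJeq:uw}; the remaining task is to re-express the contributions of $w$ through $g=\gamma_0 w$.

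Second, I would substitute the representation \eqref{JJeq:w} of $w(t)$ into \eqref{JJeq:uw}. Here the preceding lemma is invoked twice: it provides the Leibniz rule \eqref{JJw-id} (obtained by regularising $w$ in $H^1(0,t;L_2(\Omega))$), and it guarantees that the improper Bochner integral \eqref{JJdint-id} converges in $L_2(\Omega)$ for $w\in H^1(Q)$. After the substitution the two integrals carrying $\partial_s w$ cancel identically, and one is left with \eqref{JJeq:D}.

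Third, I would combine the last two integrals of \eqref{JJeq:D}. On $H^1(\Omega)$ one has $\Delta=\Delta_{\gamma_0}\Delta_{\gamma_0}^{-1}\Delta=\Delta_{\gamma_0}{\cal P}$, so the proper integral $\int_0^t e^{(t-s)\Delta}\Delta w\,ds$ equals $\lim_{\varepsilon\to0}\int_0^{t-\varepsilon}\Delta_{\gamma_0}e^{(t-s)\Delta_{\gamma_0}}{\cal P}w(s)\,ds$; combining this truncated integral with the truncated improper integral $-\int_0^{t-\varepsilon}\Delta_{\gamma_0}e^{(t-s)\Delta_{\gamma_0}}w(s)\,ds$ and using ${\cal P}w-w=-{\cal Q}w$ produces, after letting $\varepsilon\to0^+$, the single improper integral \eqref{JJeq:wdiff}, whose convergence is then automatic as the difference of two convergent limits. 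Finally \eqref{JJeq:KgQ} identifies ${\cal Q}w=(I-\Delta_{\gamma_0}^{-1}\Delta)w=K_0\gamma_0 w=K_0 g$, which turns \eqref{JJeq:wdiff} into $-\JJdashint_0^t\Delta_{\gamma_0}e^{(t-s)\Delta_{\gamma_0}}K_0 g(s)\,ds$ and yields \eqref{JJeq:ug}, with convergence of the improper integral in $L_2(\Omega)$ for every $t\in[0,T]$.

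I do not expect a genuine obstacle. The one point that requires care is consistency of the various limiting procedures: the $\partial_s w$-cancellation in the substitution of \eqref{JJeq:w} into \eqref{JJeq:uw}, and the recombination of the $\Delta w$-integral with the $\Delta_{\gamma_0}e^{(t-s)\Delta_{\gamma_0}}w$-integral, should be performed at the level of the truncated integrals over $[0,t-\varepsilon]$ \emph{before} passing to $\varepsilon\to0^+$, so that one never manipulates two individually divergent expressions; the convergence of each truncated difference then follows from the preceding lemma together with the trivial convergence of the proper integral.
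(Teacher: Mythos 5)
Your proposal is correct and follows essentially the same route as the paper: the decomposition $u=v+w$ with $w=\tilde K_0 g$, application of the Duhamel formula to $v$ to get \eqref{JJeq:uw}, substitution of \eqref{JJeq:w} with cancellation of the $\partial_s w$ terms, recombination of the two Laplacian integrals via $\JJlap=\JJlap_{\gamma_0}\JJlap_{\gamma_0}^{-1}\JJlap$, and the identification ${\cal Q}w=K_0 g$ from \eqref{JJeq:KgQ}. Your closing remark about performing the cancellations on the truncated integrals over $[0,t-\varepsilon]$ before letting $\varepsilon\to0^+$ is a welcome point of care that the paper leaves implicit.
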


\begin{remark}
  Despite the classical context, \eqref{JJeq:ug} seemingly first appeared in \cite{JJChJo18ax}.
\end{remark}

For $t=T$ the second term in \eqref{JJeq:ug} gives back $y_f=\int_0^T e^{(T-s)\JJlap}f(s)\,ds$,
cf.\ \eqref{JJyf-eq}.
But the \emph{full} influence of the boundary data $g$ on $u(T)$ is contained in the third term,
\begin{align} \label{JJzg-id}
  z_g = \JJdashint_0^T \JJlap e^{(T-s) \JJlap_{\gamma_0}} K_0 g(s) \,ds.
\end{align}
Even the basic fact that $g \mapsto z_g$ is a well-defined map is a non-trivial result;
it results at once for $t=T$ in Proposition~\ref{JJug-prop}. 
The map is clearly linear by the calculus of limits. 

In case $f=0$, $u_0=0$ it is seen from \eqref{JJeq:ug} that
$z_g = -u(T)$, so the estimate in Proposition~\ref{JJheat-prop} yields that 
$\|z_g\|_{L_2(\Omega)}\leq \sup_t\|u(t)\|_{L_2(\Omega)}\le c \|g\|_{H^{1/2}(S)}$.
This proves

\begin{lemma} \label{JJzg-lem}
  The linear operator $g\mapsto z_g$ is bounded $H^{1/2}(S)\to L_2(\Omega)$.
\end{lemma}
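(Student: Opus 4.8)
The plan is to derive Lemma~\ref{JJzg-lem} directly from the solution formula in Proposition~\ref{JJug-prop} together with the well-posedness estimate in Proposition~\ref{JJheat-prop}, which is essentially the route already hinted at in the excerpt. First I would specialise the initial boundary value problem \eqref{JJeq:heat_ivp} to the case $f=0$ and $u_0=0$, keeping only the boundary data $g\in H^{1/2}(S)$. By Proposition~\ref{JJheat-prop} this problem has a unique solution $u\in X_1$, and \eqref{JJheat-ineq} then reduces to $\|u\|_{X_1}^2\le c\|g\|_{H^{1/2}(S)}^2$. In particular, since the $X_1$-norm \eqref{JJXheat-nrm} dominates $\sup_{0\le t\le T}\|u(t)\|_{L_2(\Omega)}$, we get $\sup_t\|u(t)\|_{L_2(\Omega)}\le c\|g\|_{H^{1/2}(S)}$.

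Next I would invoke Proposition~\ref{JJug-prop}: for this particular $u$ the identity \eqref{JJeq:ug} becomes, at $t=T$,
\begin{equation}
  u(T)= -\JJdashint_0^T \JJlap_{\gamma_0} e^{(T-s)\JJlap_{\gamma_0}}K_0 g(s)\,ds = -z_g,
\end{equation}
where the right-hand side is precisely the definition \eqref{JJzg-id} of $z_g$; note that $\JJlap$ and $\JJlap_{\gamma_0}$ agree on the range of $K_0$, which lies in $H^1(\Omega)$, so the two ways of writing the integrand coincide. The convergence of the improper Bochner integral in $L_2(\Omega)$ is guaranteed by Proposition~\ref{JJug-prop}, so $z_g$ is a well-defined element of $L_2(\Omega)$; linearity of $g\mapsto z_g$ follows from the linearity of the data-to-solution map, or directly from the calculus of limits applied to \eqref{JJzg-id}. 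Combining $z_g=-u(T)$ with the bound from the previous paragraph yields
\begin{equation}
  \|z_g\|_{L_2(\Omega)} = \|u(T)\|_{L_2(\Omega)} \le \sup_{0\le t\le T}\|u(t)\|_{L_2(\Omega)} \le c\|g\|_{H^{1/2}(S)},
\end{equation}
which is exactly the asserted boundedness of $g\mapsto z_g$ as an operator $H^{1/2}(S)\to L_2(\Omega)$.

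There is no serious obstacle here: the lemma is a packaging result, and all the analytic content — the existence of the solution in $X_1$, the estimate \eqref{JJheat-ineq}, and above all the delicate existence of the improper integral and the validity of \eqref{JJeq:ug} — has already been established in Propositions~\ref{JJheat-prop} and~\ref{JJug-prop}. The only point that needs a moment's care is making sure the chosen data $(f,g,u_0)=(0,g,0)$ genuinely produces $z_g$ as written (matching $\JJlap$ versus $\JJlap_{\gamma_0}$ on $R(K_0)$, and checking the sign), after which the estimate is immediate. I would therefore present the proof in three or four lines: specialise the data, read off $z_g=-u(T)$ from \eqref{JJeq:ug}, and quote the $X_1$-estimate.
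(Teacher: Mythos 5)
Your proposal is correct and follows essentially the same route as the paper: specialise \eqref{JJeq:heat_ivp} to $f=0$, $u_0=0$, read off $z_g=-u(T)$ from \eqref{JJeq:ug}, and bound $\|z_g\|_{L_2(\Omega)}\le\sup_t\|u(t)\|_{L_2(\Omega)}\le c\|g\|_{H^{1/2}(S)}$ via Proposition~\ref{JJheat-prop}. Your added remark reconciling $\JJlap$ in \eqref{JJzg-id} with $\JJlap_{\gamma_0}$ on the range of $K_0$ is a sensible extra check but does not change the argument.
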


Moreover, Proposition~\ref{JJug-prop} gives for an arbitrary solution in $X_1$ of the heat
equation $u'-\Delta u=f$ with $\gamma_0 u=g$ on $S$ that there is a bijection $u(0)\leftrightarrow
u(T)$ given by
\begin{equation} \label{JJuTu0-id}
  u(T)=e^{T\Delta_{\gamma_0}}u(0)+y_f-z_g.  
\end{equation}
Indeed, the above breaks down as application of the bijection $e^{T\JJlap_{\gamma_0}}$, cf.\
Proposition~\ref{JJinj-prop} followed by translation in $L_2(\Omega)$  by the fixed vector $y_f-z_g$. 

The above considerations suffice for a proof of unique solvability in $X_1$ of the fully inhomogeneous final
value problem \eqref{JJeq:heat_fvp} for suitable data $(f,g,u_T)$.
Both the result and the proof are highly similar to the abstract Theorem~\ref{JJintro-thm},
but one should note, of course, the new clarification that the boundary data $g$ 
do appear in the compatibility condition, and only via $z_g$:

\begin{theorem}  \label{JJyz-thm}
For given data $(f,g,u_T)$ in $L_2(0,T; H^{-1}(\Omega))\oplus H^{1/2}(S)\oplus L_2(\Omega)$,
the final value problem \eqref{JJeq:heat_fvp} is solved by a function $u\in X_1$, whereby 
\begin{equation} 
   X_1= L_2(0,T; H^1(\Omega)) \bigcap C([0,T];L_2(\Omega)) \bigcap H^1(0,T; H^{-1}(\Omega)),
\end{equation}
if and only if the data in terms of \eqref{JJyf-eq} and \eqref{JJzg-id} satisfy the compatibility
condition 
\begin{equation}  \label{JJyz-cnd}
  u_T - y_f + z_g \in D(e^{-T \JJlap_{\gamma_0}}).
\end{equation}
In the affirmative case, $u$ is uniquely determined in $X_1$ and has the representation
\begin{align}\label{JJyz-id}
  u(t) = 
   e^{t \Delta_{\gamma_0}} e^{-T \JJlap_{\gamma_0}} (u_T - y_f + z_g) 
   + \int_0^t e^{(t-s)\JJlap} f \,ds - \JJdashint_0^t \JJlap_{\gamma_0} e^{(t-s) \JJlap_{\gamma_0}} K_0 \,ds,
\end{align}
where the three terms all belong to $X_1$ as functions of $t\in[0,T]$.
\end{theorem}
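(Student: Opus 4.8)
The plan is to mimic the structure of the proof of Theorem~\ref{JJintro-thm} verbatim, using the solution formula in Proposition~\ref{JJug-prop} as the analogue of Duhamel's formula \eqref{JJu-id} and the bijection \eqref{JJuTu0-id} as the analogue of \eqref{JJu0uT-id}. First I would prove \emph{necessity}: if $u\in X_1$ solves \eqref{JJeq:heat_fvp}, then in particular $u$ solves the initial boundary value problem \eqref{JJeq:heat_ivp} with the initial data $u_0 = u(0) \in L_2(\Omega)$ (well defined since $u\in C([0,T];L_2(\Omega))$), so by Proposition~\ref{JJug-prop} it satisfies \eqref{JJeq:ug}, hence at $t=T$ the bijection \eqref{JJuTu0-id} gives $u_T = e^{T\Delta_{\gamma_0}}u(0) + y_f - z_g$, i.e. $u_T - y_f + z_g = e^{T\Delta_{\gamma_0}}u(0)$ lies in $R(e^{T\Delta_{\gamma_0}}) = D(e^{-T\Delta_{\gamma_0}})$. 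This is exactly \eqref{JJyz-cnd}.

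Next, \emph{sufficiency and the representation formula}: assume \eqref{JJyz-cnd} holds. Then $u_0 := e^{-T\Delta_{\gamma_0}}(u_T - y_f + z_g)$ is a well-defined vector in $L_2(\Omega)$, and Proposition~\ref{JJheat-prop} furnishes a unique $u\in X_1$ solving the initial boundary value problem \eqref{JJeq:heat_ivp} with this $u_0$, with right-hand side $f$ and boundary value $g$. Evaluating the bijection \eqref{JJuTu0-id} for this $u$ and inserting $e^{T\Delta_{\gamma_0}}u_0 = u_T - y_f + z_g$ gives $u(T) = (u_T - y_f + z_g) + y_f - z_g = u_T$, so $u$ solves \eqref{JJeq:heat_fvp}. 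Substituting $u_0 = e^{-T\Delta_{\gamma_0}}(u_T - y_f + z_g)$ into the solution formula \eqref{JJeq:ug} of Proposition~\ref{JJug-prop} yields precisely \eqref{JJyz-id}; each of the three terms lies in $X_1$ because: the first is $e^{t\Delta_{\gamma_0}}u_0$ with $u_0\in L_2(\Omega)$, which is in $X_0\subset X_1$ by Theorem~\ref{JJheat0-thm} (case $f=0$); and the sum of the last two terms is, by Proposition~\ref{JJug-prop} applied with $u_0=0$, equal to the $X_1$-solution of \eqref{JJeq:heat_ivp} with data $(f,g,0)$ minus that first term, hence in $X_1$ as well (alternatively one decomposes $u = v + w$ with $w = \tilde K_0 g$ as in the proof of Proposition~\ref{JJheat-prop} and invokes Theorem~\ref{JJDuhamel-thm} for $v$ together with Lemma~\ref{JJzg-lem} and the lemma preceding Proposition~\ref{JJug-prop}).

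\emph{Uniqueness} is immediate from \eqref{JJyz-id}: the right-hand side is completely determined by the data $(f,g,u_T)$, so any two solutions in $X_1$ coincide. (One can also argue directly: if $u_1,u_2\in X_1$ both solve \eqref{JJeq:heat_fvp}, their difference solves the homogeneous heat equation with $\gamma_0(u_1-u_2)=0$ and $r_T(u_1-u_2)=0$, so by \eqref{JJuTu0-id} with $f=g=0$ one gets $0 = e^{T\Delta_{\gamma_0}}(u_1(0)-u_2(0))$, and injectivity of $e^{T\Delta_{\gamma_0}}$ forces $u_1(0)=u_2(0)$, whence $u_1=u_2$ by uniqueness in Proposition~\ref{JJheat-prop}.) Finally, although the theorem as stated does not explicitly request the stability estimate, one obtains $\|u\|_{X_1}\le c\,\|(f,g,u_T)\|_{Y_1}$ by inserting $u_0 = e^{-T\Delta_{\gamma_0}}(u_T-y_f+z_g)$ into the estimate \eqref{JJheat-ineq} of Proposition~\ref{JJheat-prop} and adding $\|u_T\|_{L_2(\Omega)}^2$ on the right, exactly as in the last paragraph of the proof of Theorem~\ref{JJintro-thm}.

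The main obstacle is not any single hard estimate but rather the bookkeeping needed to justify that the three separate terms of \eqref{JJyz-id} individually lie in $X_1$ — the middle term and the improper-integral term are only meaningful as a pair in the raw Duhamel computation, and one must invoke Proposition~\ref{JJug-prop} a second time (with homogeneous initial data) to split them off cleanly; equivalently, one returns to the $u=v+w$ splitting and tracks regularity through Lemma~\ref{JJwKg-lem}, the Leibniz-rule lemma, and Lemma~\ref{JJzg-lem}. Everything else is a direct transcription of the argument for Theorem~\ref{JJintro-thm}, with $e^{-tA}$ replaced by $e^{t\Delta_{\gamma_0}}$, with $y_f$ replaced by $y_f - z_g$, and with Theorem~\ref{JJthm:Temam}/Proposition~\ref{JJpest-prop} replaced by Proposition~\ref{JJheat-prop}.
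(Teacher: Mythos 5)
Your proposal is correct and follows essentially the same route as the paper: necessity and sufficiency via the bijection \eqref{JJuTu0-id}, the representation obtained by inserting $u(0)=e^{-T\Delta_{\gamma_0}}(u_T-y_f+z_g)$ into the formula \eqref{JJeq:ug} of Proposition~\ref{JJug-prop}, and uniqueness read off from \eqref{JJyz-id}. The only (cosmetic) difference is in the term-by-term regularity: the paper separates the two integral terms cleanly by identifying them, via Proposition~\ref{JJug-prop}, as the $X_1$-solutions of \eqref{JJeq:heat_ivp} for the data $(f,0,0)$ and $(0,g,0)$ respectively, which is exactly the resolution of the bookkeeping issue you flag at the end.
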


\begin{proof}
Given a solution $u\in X_1$, the bijection \eqref{JJuTu0-id} yields
$u_T=e^{T\JJlap_{\gamma_0}}u(0)+y_f-z_g$, so that \eqref{JJyz-cnd} necessarily holds. 
Inserting its inversion
$u(0)=e^{-T\JJlap_{\gamma_0}}(u_T-y_f+z_g)$ into the solution formula from
Proposition~\ref{JJug-prop} yields \eqref{JJyz-id}; thence uniqueness of $u$.

If \eqref{JJyz-cnd} does hold, $u_0=e^{-T\JJlap_{\gamma_0}}(u_T-y_f+z_g)$ is a vector in
$L_2(\Omega)$, so the initial value problem with data $(f,g,u_0)$ can be solved using
Proposition~\ref{JJheat-prop}. This yields a $u\in X_1$ that also solves 
\eqref{JJeq:heat_fvp}, since $u(T)=u_T$ holds by \eqref{JJuTu0-id} and the choice of $u_0$. 

The final regularity statement follows from the fact that $X_1$ also is the solution space for the
Cauchy problem in Proposition~\ref{JJheat-prop}: the improper integral in \eqref{JJyz-id} is a
solution in $X_1$ to \eqref{JJeq:heat_ivp} for data $(f,g,u_0)=(0,g,0)$, according to
Proposition~\ref{JJug-prop};  
the integral containing $f$ solves \eqref{JJeq:heat_ivp} for data $(f,0,0)$, hence is in $X_1$; the first term in \eqref{JJyz-id} solves
\eqref{JJeq:heat_ivp} for data $(0,0,e^{-T \JJlap_{\gamma_0}}v)$ for the vector $v=u_T - y_f + z_g$.
\end{proof}

Exploiting the above theorem, we let $Y_1$ stand for the set of admissible data corresponding to $X_1$. 
Within the broader space $L_2(0,T;H^{-1}(\Omega))\oplus H^{1/2}(\Gamma) \oplus L_2(\Omega)$, the
data space $Y_1$ is the subspace given
via the map $\Phi_1(f,g,u_T)= u_T-y_f+z_g$ as
\begin{align} \label{JJY1-id}
  Y_1= \Big\{(f,g,u_T) \Bigm| u_T - y_f + z_g \in D(e^{-T \Delta_{\gamma_0}}) \Big\}
     =D(e^{-T \Delta_{\gamma_0}} \Phi_1).
\end{align}
Naturally $Y_1$ is endowed with the graph norm of $e^{-T \JJlap_{\gamma_0}} \Phi_1$, 
that is, of the composite map $(f,g,u_T)\mapsto e^{-T \JJlap_{\gamma_0}}(u_T-y_f+z_g)$.
Taking $t=0$ in \eqref{JJyz-id}, it is clear that $e^{-T\JJlap_{\gamma_0}}\Phi_1(f,g,u_T)$ is the initial state $u(0)$ steered by $f$, $g$ 
to the final one $u(T)=u_T$.

In some details, the above-mentioned graph norm is given by
\begin{multline}  \label{JJY1-nrm}
  \| (f,g,u_T)\|_{Y_1}^2 =  \|u_T\|^2_{L_2(\Omega)} + \|g\|^2w_{H^{1/2}(Q)} +
  \|f\|^2_{L_2(0,T;H^{-1}(\Omega))}
\\
   + \int_\Omega\Big|e^{-T\JJlap_{\gamma_0}}\Big(u_T - \int_0^T\!e^{(T-s)\JJlap}f\,ds + 
   \JJdashint_0^{T} \JJlap_{\gamma_0} e^{(T-s)\JJlap_{\gamma_0}} K_0 g \,ds\Big)\Big|^2\,dx.
\end{multline} 
Hereby $\|e^{-T \JJlap_{\gamma_0}}(u_T-y_f+z_g)\|_{L_2(\Omega)}^2$ 
is written with explicit integrals to emphasize the
complexity of the fully inhomogeneous boundary and final value problem \eqref{JJeq:heat_fvp}. 

Completeness of $Y_1$ follows from continuity of $\Phi_1$, cf.\  Lemma~\ref{JJzg-lem} concerning
$z_g$. Indeed, its composition to the left with the closed operator $e^{-T\Delta_{\gamma_0}}$ in $L_2(\Omega)$ (cf.\
Proposition~\ref{JJinverse-prop}) is also closed. Therefore its domain
$D(e^{-T\Delta_{\gamma_0}} \Phi_1)=Y_1$
is complete with respect to the graph norm in \eqref{JJY1-nrm}. This is
induced by an inner product when $H^{-1}(\Omega)$ is given the equivalent norm 
$\JJvvvert f\JJvvvert_* =s(\JJlap_{\gamma_0}^{-1}f,\JJlap_{\gamma_0}^{-1}f)^{1/2}$, and when $H^{1/2}(Q)$ is normed as in Remark~\ref{JJHsS-rem}.
Hence $Y_1$ is a Hilbertable space.  

These preparations concerning the data space $Y_1$ lead to the following stability result:

\begin{theorem} \label{JJyz-thm'}
The unique solution $u$ of problem \eqref{JJeq:heat_fvp} lying in the Banach space $X_1$ 
depends continuously on the data  $(f,g,u_T)$ in the Hilbert space $Y_1$, when these are given 
 the norms in \eqref{JJXheat-nrm} and \eqref{JJY1-nrm}, respectively.
\end{theorem}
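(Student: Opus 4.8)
The plan is to obtain the estimate $\|u\|_{X_1}\le c\,\|(f,g,u_T)\|_{Y_1}$ by reading the value of the initial state into the well-posedness of the \emph{initial} value problem. Recall that in the proof of Theorem~\ref{JJyz-thm} the unique $u\in X_1$ solving the final value problem \eqref{JJeq:heat_fvp} was constructed as the solution, furnished by Proposition~\ref{JJheat-prop}, of the initial value problem \eqref{JJeq:heat_ivp} for the data triple $(f,g,u_0)$ with
\[
  u_0=e^{-T\JJlap_{\gamma_0}}\Phi_1(f,g,u_T)=e^{-T\JJlap_{\gamma_0}}(u_T-y_f+z_g)\in L_2(\Omega),
\]
this vector being well defined precisely because $(f,g,u_T)\in Y_1=D(e^{-T\JJlap_{\gamma_0}}\Phi_1)$, cf.\ \eqref{JJY1-id} and Lemma~\ref{JJzg-lem}. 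Since uniqueness in $X_1$ has already been established, the solution operator $(f,g,u_T)\mapsto u$ is a well-defined linear map $Y_1\to X_1$, so it remains only to bound it.

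First I would apply the a priori inequality \eqref{JJheat-ineq} of Proposition~\ref{JJheat-prop} to exactly this initial value problem, which yields
\[
  \|u\|_{X_1}^2\le c\bigl(\|u_0\|_{L_2(\Omega)}^2+\|f\|_{L_2(0,T;H^{-1}(\Omega))}^2+\|g\|_{H^{1/2}(S)}^2\bigr).
\]
Then I would match each term on the right with a summand of $\|(f,g,u_T)\|_{Y_1}^2$ from \eqref{JJY1-nrm}: the displayed formula for $u_0$ shows $\|u_0\|_{L_2(\Omega)}^2=\int_\Omega\bigl|e^{-T\JJlap_{\gamma_0}}(u_T-y_f+z_g)\bigr|^2\,dx$, which is precisely the last (improper-integral) term of the $Y_1$-norm, while the contributions $\|f\|^2$ and $\|g\|^2$ already occur among its first terms, modulo the norm equivalences used there\,---\,the norm $\JJvvvert\,\cdot\,\JJvvvert_*$ on $H^{-1}(\Omega)$ and the Hilbert-space norm on $H^{1/2}(S)$ from Remark~\ref{JJHsS-rem}\,---\,each of which only alters the constant. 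Adding the nonnegative term $\|u_T\|_{L_2(\Omega)}^2$ on the right-hand side merely enlarges the bound, so altogether $\|u\|_{X_1}^2\le c\,\|(f,g,u_T)\|_{Y_1}^2$; since $Y_1$ is complete for the norm \eqref{JJY1-nrm} (already noted) and $X_1$ is a Banach space, this is exactly the asserted continuous dependence of $u$ on $(f,g,u_T)$.

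I expect no serious obstacle here: the argument is the heat-problem analogue of the way \eqref{JJeq:Y-intro} in Theorem~\ref{JJintro-thm} was deduced from Proposition~\ref{JJpest-prop}. The only care needed is bookkeeping\,---\,confirming that the $u$ delivered by Theorem~\ref{JJyz-thm} is literally the Proposition~\ref{JJheat-prop} solution for the $u_0$ above, so that \eqref{JJheat-ineq} applies with no loss, and checking that passing between the several equivalent norms on $H^{-1}(\Omega)$ and on $H^{1/2}(S)$ is absorbed into the constant $c$. No analytic input beyond Proposition~\ref{JJheat-prop}, the representation \eqref{JJyz-id} and Lemma~\ref{JJzg-lem} is required.
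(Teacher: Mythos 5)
Your proposal is correct and coincides with the paper's own argument: both insert $u_0=e^{-T\JJlap_{\gamma_0}}(u_T-y_f+z_g)$ into the a priori estimate \eqref{JJheat-ineq} of Proposition~\ref{JJheat-prop} and then add $\|u_T\|_{L_2(\Omega)}^2$ to recover the $Y_1$-norm. Your extra bookkeeping remarks on norm equivalences are harmless and do not change the route.
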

\begin{proof}
Boundedness of the solution operator 
$(f,g,u_T)\mapsto u$ is seen by inserting the
expression  $u_0 = e^{-T \Delta_{\gamma_0}} (u_T - y_f +z_g)$ from  \eqref{JJuTu0-id} into the
estimate in Proposition~\ref{JJheat-prop},
\begin{align}  
  \|u\|_{X_1}^2 \leq 
  c ( \|e^{-T \Delta_{\gamma_0}} (u_T - y_f +z_g)\|_{L_2(\Omega)}^2 + \|f\|_{L_2(0,T;H^{-1}(\Omega))}^2 + \|g\|_{H^{1/2}(S)}^2 ).
\end{align}  
Adding $\|u_T\|_{L_2(\Omega)}^2$ on the right-hand side, one arrives at $\|(f,g,u_T)\|_{Y_1}^2$.
\end{proof}

Taken together, Theorem~\ref{JJyz-thm} and Theorem~\ref{JJyz-thm'} yield the final result:

\begin{theorem} \label{JJyz-thm''}
When $\Omega$ is a $C^\infty$ smooth, open bounded set in $\JJRn$ for
$n\ge2$, and $T>0$, then
the fully inhomogeneous final value heat conduction problem 
\begin{equation}
  \left\{
  \begin{aligned}
  \partial_t u\JJmlap u&=f\quad\text{ in $\,]0,T[\,\times \Omega$},
  \\
   \gamma_0 u&=g \quad\text{ on $\,]0,T[\,\times \Gamma$},
\\
    u(T)&=u_T \quad\text{ in $\Omega$},    
  \end{aligned}
  \right.
\end{equation}
is well posed with solutions $u$ and data $(f,g,u_T)$ belonging to
the spaces $X_1$ and $Y_1$ defined in \eqref{JJX1-id}, \eqref{JJY1-id}, and normed as in \eqref{JJXheat-nrm}, \eqref{JJY1-nrm}, respectively.  
\end{theorem}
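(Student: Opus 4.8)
The plan is to obtain Theorem~\ref{JJyz-thm''} as a direct corollary of the two preceding theorems, with essentially no new work. The statement asserts \emph{well-posedness} of the fully inhomogeneous final value heat problem, which by definition comprises three assertions: existence of a solution $u\in X_1$ precisely when the data lie in $Y_1$, uniqueness of that solution in $X_1$, and continuous dependence of $u$ on $(f,g,u_T)$ in the indicated norms. First I would recall that Theorem~\ref{JJyz-thm} already delivers existence and uniqueness: it shows that \eqref{JJeq:heat_fvp} has a solution $u\in X_1$ if and only if the compatibility condition \eqref{JJyz-cnd} holds, i.e.\ if and only if $(f,g,u_T)\in Y_1$ by the very definition \eqref{JJY1-id}, and that this solution is then unique in $X_1$ with the explicit representation \eqref{JJyz-id}.

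Second, I would invoke Theorem~\ref{JJyz-thm'} for the stability half: it states that the solution operator $(f,g,u_T)\mapsto u$ is bounded from $Y_1$, normed by \eqref{JJY1-nrm}, into $X_1$, normed by \eqref{JJXheat-nrm}. Together with the discussion preceding Theorem~\ref{JJyz-thm'}---namely that $Y_1=D(e^{-T\Delta_{\gamma_0}}\Phi_1)$ is complete in the graph norm \eqref{JJY1-nrm} because $\Phi_1$ is continuous (Lemma~\ref{JJzg-lem} handling the $z_g$-term) and $e^{-T\Delta_{\gamma_0}}$ is closed (Proposition~\ref{JJinverse-prop}), and that this graph norm is Hilbertian when $H^{-1}(\Omega)$ and $H^{1/2}(Q)$ are given the equivalent inner-product norms---this shows that $Y_1$ is genuinely a Hilbertable space, so that ``well posed'' is meant in the Hadamard sense relative to the Banach space $X_1$ and the Hilbert space $Y_1$. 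Thus the proof is simply: combine Theorem~\ref{JJyz-thm} and Theorem~\ref{JJyz-thm'}, noting that the spaces $X_1$ and $Y_1$ and their norms are exactly those named in the statement.

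The proof proper would read roughly as follows. By Theorem~\ref{JJyz-thm}, for data $(f,g,u_T)$ in $L_2(0,T;H^{-1}(\Omega))\oplus H^{1/2}(S)\oplus L_2(\Omega)$ the final value problem has a solution in $X_1$ exactly when $u_T-y_f+z_g\in D(e^{-T\Delta_{\gamma_0}})$, that is, exactly when $(f,g,u_T)$ lies in the subspace $Y_1$ of \eqref{JJY1-id}; and in that case the solution is uniquely determined in $X_1$. By Theorem~\ref{JJyz-thm'}, this unique solution depends continuously on $(f,g,u_T)$ when $X_1$ carries the norm \eqref{JJXheat-nrm} and $Y_1$ the norm \eqref{JJY1-nrm}; and as recalled above, $Y_1$ with this norm is a Hilbert(able) space. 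Hence the problem has existence, uniqueness and stability of solutions in $X_1$ for data in $Y_1$, i.e.\ it is well posed.

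I do not anticipate a genuine obstacle here, since all substance has been front-loaded into Theorems~\ref{JJyz-thm} and~\ref{JJyz-thm'}. The only point requiring a word of care---and the closest thing to a ``hard part''---is making precise in what sense the problem is well posed: one must point out that $Y_1$, though defined as the domain of the \emph{unbounded} composite operator $e^{-T\Delta_{\gamma_0}}\Phi_1$, is nonetheless a complete (indeed Hilbertable) space under its graph norm, so that the classical Hadamard notion of well-posedness applies with $Y_1$ as the data space rather than the larger, more naive space $L_2(0,T;H^{-1}(\Omega))\oplus H^{1/2}(S)\oplus L_2(\Omega)$, on which stability genuinely fails in view of the $L_2$-instability discussed in the introduction. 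This is already spelled out in the remarks following \eqref{JJY1-nrm}, so the final proof need only cite it.
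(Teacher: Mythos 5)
Your proposal is correct and matches the paper exactly: the paper derives Theorem~\ref{JJyz-thm''} by simply combining Theorem~\ref{JJyz-thm} (existence, uniqueness, and the characterisation of admissible data via \eqref{JJyz-cnd}) with Theorem~\ref{JJyz-thm'} (continuous dependence), relying on the preceding discussion that $Y_1=D(e^{-T\Delta_{\gamma_0}}\Phi_1)$ is complete and Hilbertable under the graph norm \eqref{JJY1-nrm}. Your added emphasis on the precise Hadamard sense of well-posedness relative to $Y_1$ rather than the naive product space is a faithful restatement of the paper's own remarks.
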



\begin{thebibliography}{88.}

\bibitem[1]{JJAlHe15} 
Y.~Almog and B.~Helffer, \emph{On the spectrum of non-selfadjoint
  {Schr\"odinger} operators with compact resolvent}, Comm. PDE \textbf{40}
  (2015), no.~8, 1441--1466.

\bibitem[2]{JJAma95} 
H.~Amann, \emph{Linear and quasilinear parabolic problems. {V}ol. {I}},
  Monographs in Mathematics, vol.~89, Birkh\"auser Boston, Inc., Boston, MA,
  1995, Abstract linear theory.

\bibitem[3]{JJABHN11} 
W.~Arendt, C.~J.~K. Batty, M.~Hieber, and F.~Neubrander, \emph{Vector-valued
  {L}aplace transforms and {C}auchy problems}, second ed., Monographs in
  Mathematics, vol.~96, Birkh\"auser/Springer Basel AG, Basel, 2011.

\bibitem[4]{JJChJo18} 
A.-E.~Christensen and J.~Johnsen, \emph{On parabolic final value problems and
  well-posedness}, {C. R. Acad. Sci. Paris, Ser. I} \textbf{356} (2018),
  301--305.

\bibitem[5]{JJChJo18ax} 
A.-E.~Christensen and J.~Johnsen, \emph{Final value problems for parabolic
  differential equations and their well-posedness}, {Axioms} \textbf{7} (2018),
  article no. 31; 1--36.

\bibitem[6]{JJCuHi53} 
R.~Courant and D.~Hilbert, \emph{Methods of mathematical physics. {V}ol. {I}},
  Interscience Publishers, Inc., New York, N.Y., 1953.

\bibitem[7]{JJDav80} 
E.~B.~Davies, \emph{One-parameter semigroups},
London Mathematical Society Monographs, vol.~15, Academic Press, Inc., London-New York, 1980.

\bibitem[8]{JJGrHe17} 
D.~S. Grebenkov and B.~Helffer, \emph{On the spectral properties of the
  {Bloch--Torrey} operator in two dimensions}, SIAM J. Math. Anal. \textbf{50} (2018), no.~1, 622--676.

\bibitem[9]{JJGrHelHen17}  
D.S. Grebenkov, B.~Helffer, and R.~Henry, \emph{The complex {Airy} operator on
  the line with a semipermeable barrier}, SIAM J. Math. Anal. \textbf{49}
  (2017), no.~3, 1844--1894.

\bibitem[10]{JJG1} 
G.~Grubb, \emph{{Functional calculus of pseudo-differential boundary problems}},
  second ed., Pro\-gress in Mathematics, vol.~65, Birkh{\"a}user, Boston, 1996.

\bibitem[11]{JJG09}  
G.~Grubb, \emph{{Distributions and operators}}, {Graduate Texts in Mathematics},
  vol. 252, {Springer}, New York, 2009.

\bibitem[12]{JJGrSo90} 
G.~Grubb and V.~A. Solonnikov, \emph{Solution of parabolic pseudo-differential
  initial-boundary value problems}, J. Differential Equations \textbf{87}
  (1990), 256--304.

\bibitem[13]{JJHel13} 
B.~Helffer, \emph{{Spectral theory and its applications}}, Cambridge Studies in
  Advanced Mathematics, vol. 139, Cambridge University Press, Cambridge, 2013.

\bibitem[14]{JJH} 
L.~H{\"o}rmander, \emph{{The analysis of linear partial differential   operators}}, Grundlehren der mathematischen Wissenschaften, Springer Verlag,  Berlin, 1983, 1985.

\bibitem[15]{JJIsa98} 
V.~Isakov, \emph{Inverse problems for partial differential equations}, Applied
  Mathematical Sciences, vol. 127, Springer-Verlag, New York, 1998.

\bibitem[16]{JJJohn55} 
F.~John, \emph{Numerical solution of the equation of heat conduction for
  preceding times}, Ann. Mat. Pura Appl. (4) \textbf{40} (1955), 129--142.

\bibitem[17]{JJ18logconv}  
J.~Johnsen, \emph{{Characterization of log-convex decay in non-selfadjoint dynamics}}, 
   Elec.\ Res.\ Ann.\ Math., \textbf{25} (2018), 72--86.

\bibitem[18]{JJ19cor}
J.~Johnsen, \emph{{Well-posed final value problems and Duhamel's formula for
  coercive Lax--Milgram operators}}, Elec. Res. Arch. \textbf{27} (2019), 20--36.

\bibitem[19]{JJJoMHSi3} 
J.~Johnsen, S.~Munch Hansen, and W.~Sickel, \emph{{Anisotropic
  {L}izorkin--{T}riebel spaces with mixed norms---traces on smooth
  boundaries}}, Math. Nachr. \textbf{288} (2015), 1327--1359.

\bibitem[20]{JJLiMl60} 
J.-L. Lions and B.~Malgrange, \emph{Sur l'unicit{\'e} r{\'e}trograde dans les
  probl{\`e}mes mixtes parabolic}, Math. Scand. \textbf{8} (1960), 227--286.

\bibitem[21]{JJLiMa72} 
J.-L. Lions and E.~Magenes, \emph{Non-homogeneous boundary value problems and
  applications. {V}ol. {I}}, Springer-Verlag, New York-Heidelberg, 1972,
  Translated from the French by P. Kenneth, Die Grundlehren der mathematischen
  Wissenschaften, Band 181.

\bibitem[22]{JJMiranker61}  
W.~L. Miranker, \emph{A well posed problem for the backward heat equation},
  Proc. Amer. Math. Soc. \textbf{12} (1961), 243--247.

\bibitem[23]{JJSMHphd} 
S.~Munch~Hansen, \emph{On parabolic boundary problems treated in mixed-norm
  {Lizorkin--Triebel} spaces}, Ph.D. thesis, Aalborg University; Aalborg,
  Denmark, 2013.

\bibitem[24]{JJPaz83}  
A.~Pazy, \emph{Semigroups of linear operators and applications to partial
  differential equations}, Applied Mathematical Sciences, vol.~44,
  Springer-Verlag, New York, 1983.

\bibitem[25]{JJRau91} 
J.~Rauch, \emph{Partial differential equations}, Springer, 1991.

\bibitem[26]{JJSwz66} 
L.~Schwartz, \emph{Th{\'e}orie des distributions}, revised and enlarged ed.,
  Hermann, Paris, 1966.

\bibitem[27]{JJSho74}  
R.~E. Showalter, \emph{The final value problem for evolution equations}, J.
  Math. Anal. Appl. \textbf{47} (1974), 563--572.

\bibitem[28]{JJTan79} 
H.~Tanabe, \emph{Equations of evolution}, Monographs and Studies in
  Mathematics, vol.~6, Pitman, Boston, Mass., 1979, Translated from the
  Japanese by N. Mugibayashi and H. Haneda.

\bibitem[29]{JJTem84}  
R.~Temam, \emph{{Navier--Stokes equations, theory and numerical analysis}},
  Elsevier Science Publishers B.V., Amsterdam, 1984, (Third edition).

\bibitem[30]{JJYos80}  
K.~Yosida, \emph{Functional analysis}, sixth ed., Springer-Verlag, Berlin-New
  York, 1980.

\end{thebibliography}
\end{document}